\documentclass[10pt]{article}
\usepackage{amsmath,amssymb,amsthm,mathrsfs}\usepackage[pdftex]{graphicx}
\usepackage{fullpage}
\usepackage{cite}
\usepackage{palatino}

\usepackage{tikz}
\usetikzlibrary{matrix}
\usetikzlibrary{arrows,decorations.markings}

\newcommand{\R}{\mathbb{R}}

\newcommand{\e}{\epsilon}
\renewcommand{\phi}{\mathbf f}

\newcommand{\eh}[2]{e_{{#1},{#2}}^{\mathsf{H}}}
\newcommand{\ev}[2]{e_{{#1},{#2}}^{\mathsf{V}}}
\newcommand{\ed}[2]{e_{{#1},{#2}}^{\mathsf{D}}}

\newcommand{\sir}[2]{\sigma_{{#1},{#2}}^{\mathsf{R}}}
\newcommand{\sil}[2]{\sigma_{{#1},{#2}}^{\mathsf{L}}}

\newcommand{\sv}[3]{({#1},{#2},\mathsf{#3})}

\newcommand{\tsv}[3]{\theta_{\sv{#1}{#2}{#3}}}

\newtheorem{thm}{Theorem}[section]
\newtheorem{lem}[thm]{Lemma}
\newtheorem{prop}[thm]{Proposition}

\newtheorem{definition}[thm]{Definition}
\newtheorem{rem}[thm]{Remark}
\newtheorem{cor}[thm]{Corollary}

\newcommand{\av}[1]{\left|{#1}\right|}
\newcommand{\ip}[2]{\left\langle{{#1}},{{#2}}\right\rangle}
\newcommand{\mc}[1]{\mathcal{#1}}

\newcommand{\dmax}{d_{{\mathsf {max}}}}
\newcommand{\uppadj}{\smallfrown}

\newcommand{\lowadj}{\smallsmile}

\newcommand{\bdy}{\partial}

\DeclareMathOperator{\diag}{diag}
\DeclareMathOperator{\im}{Im}

\title{Consensus on simplicial complexes, or: \\The nonlinear simplicial Laplacian}
\author{Lee DeVille\\University of Illinois}

\begin{document}

\maketitle

\begin{abstract}
  We consider a nonlinear flow on simplicial complexes related to the simplicial Laplacian, and show that it is a generalization of various consensus and synchronization models commonly studied on networks.  In particular, our model allows us to formulate flows on simplices of any dimension, so that it includes edge flows, triangle flows, etc.   We show that the system can be represented as the gradient flow of an energy functional, and use this to deduce the stability of various steady states of the model.  Finally, we demonstrate that our model contains higher-dimensional analogues of structures seen in related network models.
\end{abstract}

{\bf Keywords:} Simplicial Laplacian, combinatorial Laplacian, consensus, synchronization, Kuramoto

{\bf AMS classification:} 34D06, 55U10, 15A18, 68M14, 05C65
\section{Introduction}

\subsection{Background and motivation}

As the field of network science has grown over the last few decades, there has been a significant interest in the mathematical community on understanding dynamical systems on networks~\cite{newman2006structure, boccaletti2006complex, barabasi2016network, newman2018networks}; by this it is generally meant a system that is defined by relatively simple subsystems that are pairwise coupled in various ways.  Of particular interest have been systems where  individual units evolve under relatively simple dynamics in isolation, but the coupling  induces an emergent dynamics on the network level.

More recently, there has been a growing recognition that networks are not enough;  we should not just consider pairwise interactions, but  we should also pay attention to interactions of larger subsets of elements~\cite{salnikov2018simplicial, mulder2018network, lambiotte2019networks, torres2020and, serrano2020simplicial}.  This viewpoint has lead to breakthroughs in several application domains, including neuroscience~\cite{giusti2016two, bassett2017network, curto2017can, sizemore2018cliques, kanari2018topological}, biological processes~\cite{klamt2009hypergraphs, ladroue2009beyond, roman2015simplicial}, statistical physics~\cite{courtney2016generalized}, social networks~\cite{patania2017shape, iacopini2019simplicial} and signal analysis/prediction~\cite{muhammad2006control, stolz2017persistent, benson2018simplicial, barbarossa2020topological}. For  very nice and comprehensive surveys of the state of the art, see~\cite{battiston2020networks, porter2020nonlinearity+}.  

Another way of describing the above is that we should not just consider graph structures but also hypergraph structures~\cite{berge1984hypergraphs, bollobas1986combinatorics} in dynamics.  One special class of hypergraphs is those that form {\em simplicial complexes}; these are hypergraphs with the property that if a subset is active, then all of its subsets are also active.  The study of simplicial complexes has a long history in mathematics~\cite{Munkres.book, Hatcher.book, lim2020hodge} and underpin many modern tools in algebraic topology.   The recent field of {\em topological data analysis}~\cite{carlsson2009topology, patania2017topological, speidel2018topological, bubenik2020persistence} has the goal of using tools from algebraic topology to understand complex datasets --- in particular, the study of the persistence of certain simplicial complexes as a function of various parameters has given significant insight in many applications.

One class of networks that have been broadly considered on networks are called {\em synchronization} or {\em consensus} dynamics; as a general rule, the former tends to refer to the case where the microscopic dynamics are given by an oscillator~\cite{peskin1975mathematical, Kuramoto.75, Sastry.Varaiya.80, Sastry.Varaiya.81, Ermentrout.1985,S, Kuramoto.91, Kuramoto.book, pikovsky2003synchronization, SM1, Acebron.etal.05, Mirollo.Strogatz.05, Wiley.Strogatz.Girvan.06, Abrams.Mirollo.Strogatz.Wiley.08, Arenas.etal.08, Dorfler.Bullo.2011, Dorfler.Bullo.12, Dorfler.Chertkov.Bullo.13, Dorfler.Bullo.14, Bronski.DeVille.Ferguson.16, Delabays.Coletta.Jacquod.16, Delabays.Coletta.Jacquod.17, Bronski.Ferguson.2018, ferguson2018volume, ferguson2018topological, pikovsky2003synchronization, salova2020decoupled}, the latter when the miscroscopic dynamics live in some Euclidean or metric space~\cite{degroot1974reaching, aumann1976agreeing, hegselmann2002opinion, Boyd.Diaconis.Xiao.04, Xiao.Boyd.04, ren2005survey, Xiao.Boyd.Kim.07, olfati2007consensus, wang2010finite, mesbahi2010graph, srivastava2011bifurcations, neuhauser2020multibody}.  The common thread of this class of models is to understand how and when the units work in concert to form some sort of coherent structure; for example, this might mean that a collection of oscillators will all be rotating with the same frequency, or it might mean that individual agents are converging to a commonly shared opinion.

Many of the linear synchronization/consensus models on networks are related to the graph Laplacian or its generalizations~\cite{bronski2014spectral, homs2020nonlinear, lucas2020multiorder}.  As such, it is  natural to consider nonlinear dynamics whose linearizations correspond to an appropriate Laplacian.   Dynamics in this class are widely studied on networks, and have been studied as well in both hypergraphical~\cite{skardal2019higher, xu2020spectrum, lucas2020multiorder, xu2020bifurcation, skardal2020memory, landry2020effect} and simplicial contexts~\cite{millan2020explosive,  horstmeyer2020adaptive, hansen2020opinion}.  (In particular, the model in~\cite{millan2020explosive} is quite similar to ours and fits into the class that we study, see Section~\ref{sec:outtro}.) 

The study of the Laplacian that corresponds to simplicial complexes has a long history, see~\cite{eckmann1944harmonische, friedman1998computing, dong2002combinatorial, duval2002shifted, horak2013spectra, horak2013interlacing} and has been termed the {\em combinatorial Laplacian}; here we will also call it the {\em simplicial Laplacian} for clarity.  The model presented in this paper is a nonlinear generalization of the simplicial Laplacian, both in the sense that its linearization recovers the Laplacian, and also that it its construction involves the introduction of a nonlinearity intertwined with the process of constructing the simplicial  Laplacian.   There has also been work in understanding random walks on simplicial complexes and their relationship to Laplacians and harmonic functions~\cite{parzanchevski2017simplicial, schaub2020random}.

\subsection{Summary of results of this paper}

In Section~\ref{sec:model} we present the model that we study in this paper; in Section~\ref{sec:def} we give a review of the definition and properties of simplicial complexes and give the system of equations that we study, in Section~\ref{sec:connection} we show that our model is a generalization of standard synchronization/consensus models such as the Kuramoto model, in Section~\ref{sec:interp} we give a coordinate-wise description of our model.  The main results of the paper are in Section~\ref{sec:theory}: in Section~\ref{sec:gradient} we show that the nonlinear simplicial Laplacian can be written as a (rescaled) gradient flow; in Section~\ref{sec:linearization} we compute the linearization of the nonlinear simplicial Laplacian around an arbitrary steady state and derive some of its properties; in Section~\ref{sec:convexity} we use convexity to establish the nonlinear stability of certain steady states; and in Section~\ref{sec:compactness} we consider the question of the existence of steady states at all.  Finally, in Section~\ref{sec:examples} we give several examples of the flows that arise under various choices of simplicial complexes, and demonstrate that the simplicial dynamics have many of the complex features observed in network dynamics.

\section{The model}\label{sec:model}

\subsection{General model definition}\label{sec:def}

As described above, we would like to consider a nonlinear generalization of the simplicial Laplacian that can subsumes general nonlinear consensus algorithms.  We first give an overview of the definitions relating to simplicial complexes, and then describe the flow we study.

\begin{definition}[Simplicial complex]
Let $V$ be a set.  A {\bf $k$-simplex} is a unordered set $\{v_0,\dots,v_k\}$ with $v_i\in V$ and $v_i\neq v_j$.  A {\bf face} of a $k$-simplex is all $(k-1)$-simplices of the form $\{v_0,\dots,v_k\}\setminus \{v_i\}$, which we will also denote by $\{v_0,\dots,\widehat{v_i},\dots,v_k\}$.  A {\bf simplicial complex} $X$ is a collection of simplices closed under inclusion of all faces, and we denote $X_d$ as the set of simplices strictly of dimension $d$.  We denote by $\dmax(X)$ as the maximal dimension of all of the simplices in $X$.

Two $d$-simplices $F_1,F_2$ of a complex $X$ are called {\bf upper adjacent} if both are faces of some $d+1$-simplex; in this case we write $F_1 \uppadj F_2$. 
Two $d$-simplices $F_1,F_2$ of a complex $X$ are called {\bf lower adjacent} if they have a common face; in this case we write $F_1 \lowadj F_2$. 
\end{definition}

\begin{definition}[Ordered and weighted complexes]
An ordering of vertices gives an {\bf orientation}, and an {\bf oriented $k$-simplex} is a $k$-simplex with an order type.  By definition, orientation is anti-symmetric with respect to an exchange of vertices.  We denote an ordered simplex by square brackets, e.g. $[v_0,\dots, v_k]$.  

Finally, we also allow for {\bf weighted} simplices; for each simplex in the complex we can attach a real number, known as the {\bf weight} of the simplex.  For a simplex $F\in X$ we denote the weight of $F$ by $w(F)$.  We will assume throughout that all weights are positive.
\end{definition}

For $X$ a simplicial complex, let $C_d(X)$ be the $\R$-vector space with basis given by the elements of $X_d$.  For each $d$, we have the {\bf boundary map} defined by 
\begin{align*}
  \partial_d \colon C_d(X) &\to C_{d-1}(X)\\
    [v_0,\dots,v_k]&\mapsto \sum_{\ell=0}^k (-1)^\ell [v_0,\dots,\widehat{v_\ell},\dots, v_k]
\end{align*}
and extended by linearity.  (As is well known, the map $\partial_d\circ \partial_{d+1}$ is identically zero.)  The kernel of the map $\partial_d\colon C_d(X)\to C_{d-1}(X)$ is known as the $d$-cycles of the complex $X$, and the image of $\partial_{d+1}\colon C_{d+1}(X)\to C_d(X)$ are called the $d$-boundaries.  Since $\partial_d\circ \partial_{d+1}=0$, it is natural to define the $d$-th homology group $ H_d(X) := \ker \partial_d / \im \partial_{d+1}$.

The weights induce a function $w\colon C_d(X)\to C_d(X)$ generated by the map that attaches $w(F)$ to each $F$. We will want to use matrix representations of the maps $\partial$ and $w$ and as such we need to choose a  basis.  We assume throughout that the basis chosen is always given by the elements of $X_d$, so that the choice of basis corresponds only to an ordering of the simplices in $X_d$.  Once we have made this choice, we let $B_d$ be the matrix representation of $\partial_d$ and $W_d$ the matrix representation of $w$.  (Again, this matrix will depend on the choice of ordering of the simplices in $X_d$, but nothing else.)  If $\theta_d\in C_d(X)$, we have   
\begin{equation*}
  B_d \theta_d \in C_{d-1}(X),\quad B_{d+1}^\intercal\theta_d \in C_{d+1}(X),\quad W_d\theta_d \in C_d(X).
\end{equation*}
(We give some examples below in Section~\ref{sec:interp}.)  Note that $W_d$ is always a diagonal matrix with the weights given on the diagonal.

Let $\phi\colon\R\to\R$ be a generic function with $\phi(0) = 0$ and $\phi'(0)>0$.  This induces a function $\phi\colon \R^n\to\R^n$ defined by  $\phi(x)_i = \phi(x_i)$ where we apply the scalar function $\phi(\cdot)$ componentwise.

For each $d$, let $\omega_d\in C_d(X)$ and we can then define 
the system of equations:
\begin{equation}\label{eq:nl}
\begin{split}
	\frac{d}{dt} \theta_0 &= \omega_0-W_0^{-1}B_{1}W_{1}\phi(B_{1}^\intercal \theta_0),\\
  \frac{d}{dt} \theta_d &= \omega_d-\left[W_d^{-1}B_{d+1}W_{d+1}\phi(B_{d+1}^\intercal \theta_d) + B_d^\intercal W_{d-1}^{-1}\phi(B_dW_d\theta_d)\right],\quad 0 < d<\dmax(X),\\
  \frac{d}{dt} \theta_{D} &= \omega_{D} - B_{D}^\intercal W_{D-1}^{-1}\phi(B_{D}W_{D}\theta_{D})\quad (D:=\dmax(X)).
  \end{split}
\end{equation}

\begin{definition}
When we choose $\omega_d \equiv 0$ for all $d$, we say that~\eqref{eq:nl} is {\bf homogeneous}, and otherwise we say that the system is {\bf inhomogeneous}.  We will refer to the $d$th equation in~\eqref{eq:nl} as the {\bf $d$-simplex flow}, and will use the appropriate names as well, i.e. the ``vertex flow'' is the $0$-simplex flow, the ``edge flow'' is the $1$-simplex flow, etc.
\end{definition}

\begin{rem}
The boundary terms $d=0,D$ are a bit different, but if we look at the middle equation and just drop undefined terms, then we obtain the extremal equations as well.  For example, if $d=0$, then $B_0$ is not defined (and clearly $W_{-1}$ as well), but if we just drop that term and plug in $d=0$ into the first term, we recover the $0$th order equation (and similarly, for the $D$th equation).
\end{rem}

Although the right-hand sides above are complicated, we can see that they at least define a vector field on $C_d(X)$:  for example,
\begin{equation*}
  C_d(X)\xrightarrow{B_{d+1}^\intercal} C_{d+1}(X) \xrightarrow{\phi(\cdot)} C_{d+1}(X) \xrightarrow{W_{d+1}}C_{d+1}(X) \xrightarrow{B_{d+1}} C_d(X)\xrightarrow{W_d^{-1}} C_d(X), 
\end{equation*}
and similarly for the other term.

Note that $\theta_d = {\bf 0}$ is a fixed point of~\eqref{eq:nl} when we choose $\omega_d \equiv 0$.  When we linearize around this solution (see Section~\ref{sec:linearization} for more detail) we obtain the linear flow
\begin{equation}\label{eq:l}
  \frac{d}{dt}z_d = -\phi'(0)\left(W_d^{-1}B_{d+1}W_{d+1}B_{d+1}^\intercal + B_d^\intercal W_{d-1}^{-1}B_dW_d\right) z_d,
\end{equation}
(also similarly applying to the $d=0,D$ terms) and this corresponds to the simplicial Laplacian, giving
\begin{equation*}
  \frac{d}{dt}z_d = -\phi'(0)\mc{L}_dz_d.
\end{equation*}
As shown in~\cite{horak2013spectra}, $\mc{L}_d$ is positive semi-definite as long as all of the weights are nonnegative, and therefore the eigenvalues of the linear operator in~\eqref{eq:l} are all non-positive.  As such,~\eqref{eq:l} has no linearly unstable nodes.  We show below in Section~\ref{sec:convexity} that ${\bf 0}$ is dynamically stable.

\begin{rem}
One thing to note about~\eqref{eq:nl} is that it only involves simplices of a particular dimension $d$.  For a given simplicial complex, we have a family of uncoupled equations, one for each dimension.  We discuss below in Section~\ref{sec:outtro} how we might couple across different dimensions.
\end{rem}

We will also consider the case where the simplicial complex is {\bf unweighted}, or, equivalently, the weight on every simplex is chosen to be 1.  In this case,~\eqref{eq:nl} becomes
\begin{equation}\label{eq:unweighted}
  \frac{d}{dt} \theta_d = -\left[B_{d+1}\phi(B_{d+1}^\intercal \theta_d) + B_d^\intercal \phi(B_d\theta_d)\right].
\end{equation}
Linearizing this equation around $\theta_d\equiv {\bf 0}$ gives (a scalar multiple of) the standard $d$th-order Laplacian
\begin{equation}
  \frac{d}{dt}z_d = -\phi'(0) (B_{d+1}B_{d+1}^\intercal + B_d^\intercal B_d)z_d.
\end{equation}

We can also refer to each of these terms separately as an ``up'' Laplacian and a ``down'' Laplacian, following~\cite{horak2013spectra}.  As we see from~\eqref{eq:nl} or~\eqref{eq:unweighted}, the first term on the right-hand side goes ``up'' to dimension $d+1$  and the second term goes ``down'' to dimension $d-1$.

\subsection{Connection to network consensus models}\label{sec:connection}

We consider the following general network dynamical system:
\begin{equation}\label{eq:K}
  \frac{d}{dt} \eta_i = \zeta_i - \sum_{j=1}^n \gamma_{ij} \phi(\eta_j - \eta_i),
\end{equation}
where we assume $\gamma_{ij} = \gamma_{ji}$ (so that the weight is a function only of the pair $\{i,j\}$ and not their ordering).  Let us make the further assumption that $\phi(\cdot)$ is odd. We show here that this fits into the hierarchy~\eqref{eq:nl} in a simple manner.  We point out that if we choose $\phi(\cdot) = \sin(\cdot)$, then~\eqref{eq:K} becomes the oft-studied Kuramoto model on a graph.  The main result of this section is:

\begin{prop}
  Let $G=(V,E)$ be the graph induced by~\eqref{eq:K} above, i.e. $\{i,j\}\in E \iff \gamma_{ij}\neq 0$.  Let $B_1$ be the incidence matrix of $G$, $W_1$ the $\av E \times \av E$ matrix with $(W_1)_{ee} = \gamma_e$, and for now assume that $W_0$ is the $n\times n$ identity matrix. Let $\omega_0$ be the vector $(\zeta_i)_{i=1}^n$. Then~\eqref{eq:K} is the same flow as 
 \begin{equation}\label{eq:B}
   \frac{d}{dt} \theta_0 = \omega_0-B_1 W_1\phi(B_1^\intercal \theta_0).
\end{equation}
and thus fits into the hierarchy~\eqref{eq:nl} at the vertex flow level.   More generally, if we consider a general $W_0$ with $(W_0)_{vv} = \tau_v$, then this gives a componentwise time-rescaling of~\eqref{eq:K}.
\end{prop}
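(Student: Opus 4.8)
The plan is to prove this by unwinding both sides of the claimed identity into explicit coordinate expressions and checking they agree term-by-term. The key observation is that the proposition reduces to verifying that the map $\theta_0 \mapsto B_1 W_1 \phi(B_1^\intercal \theta_0)$, when written componentwise, reproduces exactly the coupling sum $\sum_j \gamma_{ij}\phi(\eta_j-\eta_i)$ appearing in~\eqref{eq:K}. So the whole content is a bookkeeping computation about the incidence matrix $B_1$ and how it sandwiches the nonlinearity.

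First I would fix notation and orientations carefully, since this is where the main subtlety lives. The incidence matrix $B_1$ has rows indexed by vertices and columns indexed by oriented edges $e=[i,j]$, with entries $(B_1)_{ve} = -1$ if $v$ is the tail ($v=i$), $+1$ if $v$ is the head ($v=j$), and $0$ otherwise (matching the sign convention of $\partial_1[v_0,v_1]=[v_1]-[v_0]$). Then I would compute $B_1^\intercal \theta_0$: its $e$-th component is $(B_1^\intercal \theta_0)_e = \theta_j - \theta_i$ for $e=[i,j]$. Applying $W_1$ (diagonal with $\gamma_e$) and then $\phi$ componentwise, the $e$-th entry of $W_1\phi(B_1^\intercal\theta_0)$ is $\gamma_e\,\phi(\theta_j-\theta_i)$. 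Finally I would apply $B_1$ on the left and read off the $i$-th vertex component, collecting contributions from all edges incident to $i$.

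The crux — and the step I expect to be the main obstacle — is handling the signs and the oddness of $\phi$ consistently. When we apply $B_1$, each edge $e=\{i,j\}$ contributes to vertex $i$ with the sign $(B_1)_{ie}$, which depends on whether $i$ is the head or tail in the chosen orientation of $e$. I would show that for a fixed orientation $e=[i,j]$, the contribution to the $i$-th component is $-\gamma_e\,\phi(\theta_j-\theta_i)$, and here the assumption that $\phi$ is odd is exactly what makes the answer independent of the orientation choice: reversing to $[j,i]$ flips both the sign of $(B_1)_{ie}$ and the argument $\theta_j-\theta_i \mapsto \theta_i-\theta_j$, and the two sign flips cancel. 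This guarantees the expression $\sum_j \gamma_{ij}\phi(\theta_j-\theta_i)$ is well-defined. Matching the resulting $i$-th component $-\sum_{j}\gamma_{ij}\phi(\theta_j-\theta_i)$ against the right-hand side of~\eqref{eq:K} (with $\theta_0$ identified with $\eta$, $\omega_0$ with $(\zeta_i)$, and using $\gamma_{ij}=\gamma_{ji}$) establishes that~\eqref{eq:B} coincides with~\eqref{eq:K}, proving the claim for $W_0=I$.

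For the final sentence of the proposition, I would observe that inserting a general diagonal $W_0$ with $(W_0)_{vv}=\tau_v$ produces the equation $\frac{d}{dt}\theta_0 = \omega_0 - W_0^{-1}B_1 W_1\phi(B_1^\intercal\theta_0)$, which matches the $d=0$ line of~\eqref{eq:nl}. Reading the $v$-th row, this is $\frac{d}{dt}\theta_v = \zeta_v - \tau_v^{-1}\sum_j \gamma_{vj}\phi(\theta_j-\theta_v)$, but since $\omega_0$ is also being scaled one should rather note that $W_0 \frac{d}{dt}\theta_0 = W_0\omega_0 - B_1W_1\phi(\cdots)$; dividing each component by $\tau_v$ shows each node evolves on its own timescale, i.e. rescaling time as $t\mapsto t/\tau_v$ componentwise recovers~\eqref{eq:K}. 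I would state this as the componentwise time-rescaling claim, noting it is immediate once the $W_0=I$ case and the diagonal structure of $W_0$ are in hand.
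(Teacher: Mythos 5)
Your proposal is correct and follows essentially the same route as the paper: unwind $B_1 W_1\phi(B_1^\intercal\theta_0)$ componentwise using $(B_1)_{ve}=\delta_{v,t(e)}-\delta_{v,s(e)}$, identify the inner term as $\phi(\theta_{t(e)}-\theta_{s(e)})$, and use the oddness of $\phi$ together with $\gamma_{ij}=\gamma_{ji}$ to show the result is independent of the arbitrary edge orientation and matches the coupling sum in~\eqref{eq:K}. Your handling of the general $W_0$ as a componentwise time-rescaling is also consistent with the paper's (unproved) closing remark.
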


\begin{rem}
In short, the vertex flow for any simplicial complex gives rise to a network consensus model; conversely, for any network consensus model, there is a simplicial complex (even a graph) whose vertex flow corresponds to it.
\end{rem}

\begin{proof}
Let us consider the graph $G=(V,E)$ with vertices $V = \{v_1,\dots, v_n\}$ and edges $E =\{(i,j): \gamma_{ij}\neq 0\}$.  For each edge $e\in E$, let us choose an arbitrary orientation (e.g. we can say $i\to j$ iff $i<j$ whenever $\{i,j\}\in E$).   The incidence matrix $B_1$ is the the $\av V\times \av E$ matrix given by
\begin{equation*}
  (B_1)_{ve} = \begin{cases} 1,& e= (w,v)\\ -1,& e = (v,w),\\0,&\mbox{else.}\end{cases}
\end{equation*}
We write this more compactly as follows:  let $s(e)$ and $t(e)$ be the source and target of an edge (so, for example, $s(i\to j) = i$ and $t(i\to j) = j$).  Then $(B_1)_{ve} = \delta_{v,t(e)} - \delta_{v,s(e)}$. 
Let us compute the $v$th component of the right-hand side of~\eqref{eq:B}.  We have 
\begin{align*}
  -(B_1 W_1\phi(B_1^\intercal \theta_0))_v
  &= - \sum_{e\in E}(B_1)_{ve} (W_1)_{ee} \phi \left(\sum_{w\in V}(B_1^\intercal)_{ew}\theta_{0,w}\right).
\end{align*}
Since $(B_1^\intercal)_{ew} = (B_1)_{we} = \delta_{w,t(e)}-\delta_{w,s(e)}$, the inner term is $\phi(\theta_{0,t(e)} - \theta_{0,s(e)})$.  Thus we are left with
\begin{equation*}
  - \sum_{e\in E}(B_1)_{ve} \gamma_e \phi(\theta_{0,t(e)} - \theta_{0,s(e)}).
\end{equation*}
There are only two types of edges for which we will get a contribution:  either $e = (v,w)$ or $e = (w,v)$.  If $e = (v,w)$, this sum becomes
\begin{equation*}
  -\sum_{w\in E} (-1) \gamma_{(v,w)} \phi(\theta_{0,w}-\theta_{0,v}),
\end{equation*}
whereas if $e = (w,v)$, then we obtain
\begin{equation*}
  -\sum_{w\in E} (+1) \gamma_{(w,v)} \phi(\theta_{0,v}-\theta_{0,w}).
\end{equation*}
Summing these gives
\begin{equation*}
  \sum_{w\in E} (\gamma_{(v,w)} + \gamma_{(w,v)} ) \phi(\theta_{0,w}-\theta_{0,v}).
\end{equation*}
Since we chose an orientation on the edge $\{v,w\}$, only one of these is nonzero and it is equal to the weight on that edge, and thus we have
\begin{equation*}
  \frac{d}{dt}\theta_{0,v} =  \sum_{w\in E} \gamma_{\{v,w\}} \phi(\theta_{0,w}-\theta_{0,v}),
\end{equation*}
which, after a translation of notation, is exactly~\eqref{eq:K}.

\end{proof}

If we think of the matrix $B$ as encoding the operation of taking boundaries of the edges in the graph, then $B$ naturally maps functions on  vertices to functions on  edges and $B^\intercal$ maps functions on edges to functions on vertices, composing this sequence:
\begin{equation*}
  C_0(X)\xrightarrow{B_{1}^\intercal} C_{1}(X) \xrightarrow{\phi(\cdot)} C_{1}(X) \xrightarrow{W_{1}}C_{1}(X) \xrightarrow{B_{1}} C_0(X).
\end{equation*}
It is clear from~\eqref{eq:B} that if $\theta\in \ker B^\intercal$, then $\theta$ is a fixed point for the dynamics.  Since each row of $B^\intercal$ has exactly one $+1$ and one $-1$, it is clear that any constant vector lies in the kernel.  Moreover, any vector that is constant on a connected component of the graph will lie in $\ker B^\intercal$.  We can also see from inspection that any such vector must be a fixed point of~\eqref{eq:K} by plugging into the right-hand side. 

It is also insightful to note that if we consider a fixed point $\theta\in \ker B^\intercal$, then the linearization of~\eqref{eq:B} is of the form 
\begin{equation*}
  z' = - B_1W_1B_1^\intercal z = -\mathcal{L}_G z, 
\end{equation*}
where $\mc L_G$ is the (weighted) graph Laplacian of the graph $G = (V,E,W)$.  From this we can establish the index of the Jacobian at such a point using standard graph techniques; in particular, $\mc L_G$ is positive semi-definite if all of the edge weights are nonnegative, and the dimension of its nullspace is one more than the number of connected components of $G$ (which is just a statement about the rank of the 0th homology group).

If we let $\phi(x)= x$, then the right-hand side of~\eqref{eq:B} is the graph Laplacian itself, and these are all of the fixed points of~\eqref{eq:B}.  This is just a restatement that the 0th homology group of $G$ has rank equal to one more than the number of connected components of $G$.

It is natural to ask if every fixed point of~\eqref{eq:B} is of this form, but for nonlinear $\phi$,~\eqref{eq:B} can have many more fixed points.  One major class of examples can be constructed when the coefficients $\gamma$ are circulant (i.e. $\gamma_{ij} = \gamma_{\av{i-j}\mbox{ \scriptsize mod} n}$) are the twist solutions $\theta^{(p)}$ with $\theta^{(p)}_k = 2\pi p k/n$ are also a solution to~\eqref{eq:K}.  They have been called ``twist'' solutions~\cite{Wiley.Strogatz.Girvan.06} since they twist around the unit circle in a regular fashion.  We discuss ``twist-like'' solutions in Section~\ref{sec:twistlike} below that are the analogues of these twist solutions.

\subsection{Interpretation of our model ``in coordinates''}\label{sec:interp}

As we have shown in the previous section, the vertex flow gives a standard network consensus model.  More generally, let us consider what a component of~\eqref{eq:nl} might look like.  For now, let us consider the unweighted model~\eqref{eq:unweighted}:

\begin{equation*}\tag{\ref{eq:unweighted}}
  \frac{d}{dt} \theta_d = -\left[B_{d+1}\phi(B_{d+1}^\intercal \theta_d) + B_d^\intercal \phi(B_d\theta_d)\right].
\end{equation*}
Let $F$ be a $d$-simplex, and we compute what $d/dt \theta_{d,F}$ looks like.  Let us consider the first term:
\begin{equation}\label{eq:up}
  \left(B_{d+1}\phi(B_{d+1}^\intercal \theta_d)\right)_F
  	= \sum_{G\in C_{d+1}(X)} (B_{d+1})_{F,G} \cdot \phi\left(\sum_{H\in C_d(X)} (B_{d+1}^\intercal)_{G,H}\theta_{d,H}\right).
\end{equation}
Now notice that $(B_{d+1}^\intercal)_{F,G}\neq 0$ iff $F\in \partial G$, and $(B_{d+1})_{G,H}\neq 0$ iff $H\in \partial G$.  Therefore the only terms that appear in the $F$th coordinate are those $d$-simplices $H$ for which $F\uppadj H$, i.e. $F,H$ are both in the boundary of some $d+1$-simplex.  Similarly, the second term:
\begin{equation}\label{eq:down}
  (B_d^\intercal \phi(B_d\theta_d))_F = \sum_{G\in C_{d-1}(X)} (B_{d}^\intercal)_{F,G} \cdot \phi\left(\sum_{H\in C_d(X)} (B_{d})_{G,H}\theta_{d,H}\right),
\end{equation}
so $(B_d)_{G,H}\neq 0 \iff G\in\partial H$ and $(B_{d}^\intercal)_{F,G}\neq 0\iff G\in\partial F$, so that the $H$th term appears in the $F$th equation iff $F\lowadj H$.

Let us also notice that the coefficient of $\theta_{d,F}$ in $(d/dt)\theta_{d,F}$ is always (effectively) negative.  To see what we mean, consider the term in~\eqref{eq:up} with $H=F$; notice that the sum is now over $G\in C_{d+1}(X)$ where $F\in \bdy G$, and note here that $ (B_{d+1})_{F,G} = (B_{d+1}^\intercal)_{G,H}$ and so both terms have the same sign.  If, for example, $\phi(\cdot)$ is odd, this means every such term will be of the form $-\phi(-\theta_{d,F})$ or $+\phi(+\theta_{d,F})$ --- and recall that there is a minus sign in front of anything.

In short, we can summarize~\eqref{eq:nl} in words as such:  ``Every term on the right hand side of $(d/dt)\theta_d$ is a linear combination of $\phi$'s applied to a linear combination of $d$-simplices, with the property that $\theta_{d,F}$ always appears in $(d/dt)\theta_{d,F}$ with a minus sign, and only those terms appear that correspond to simplices that are either lower or upper adjacent to $F$.''  From a controls standpoint this makes sense:  while the equations are complex, there is a negative feedback in each component --- and it is apparent from this description that the simplicial complex is what is coupling the terms in the equations.  From this we expect $\theta_d\equiv{\bf 0}$ to be a stable fixed point, and we will show this in more detail and rigor below.

From this perspective, the calculation of the previous section makes sense.  If we are considering a vertex flow, then there is no ``down'' term, and only an ``up'' term.  In this case, two vertices can only be coupled if they are both in the boundary of some 1-simplex, i.e. if they are the two vertices in a boundary of some edge --- which is exactly what we see in~\eqref{eq:K}.   See also section~\ref{sec:maximal} below for related examples.

Finally, note that if we allow for general weights, all that happens is that instead of sums we allow for general linear combinations.

\section{Theory}\label{sec:theory}

In this section we show that~\eqref{eq:nl} can be written in gradient form and use this to deduce local stability around certain fixed points, study some properties of the linearization of~\eqref{eq:nl}, study the energy functional, and finally discuss some properties of multistability. 

\subsection{Gradient flow formulation}\label{sec:gradient}

\begin{lem}\label{lem:gradient}
  Let $A$ be an $m\times n$ matrix, $D$ an $n\times n$ diagonal matrix, $F\colon\R\to\R$, and $x\in \R^n$.   Extend $F$ to a function $F\colon \R^n\to\R^n$ by evaluating componentwise. Finally, define the function 
\begin{equation*}
  E(x) := {\bf 1}^\intercal DF(Ax) = \sum_{i=1}^m DF((Ax)_i).
\end{equation*}
Then
  \begin{equation*}
    \nabla E(x) = A^\intercal D F'(Ax),
  \end{equation*}
  where, as above, we interpret $F'(\cdot)$ as a vector function by applying the scalar function componentwise.
\end{lem}

\begin{proof}
First note that 
\begin{equation*}
  DF((Ax)_i) = D(F(Ax)_i) = D_{ii}(F(Ax)_i)
\end{equation*}
and thus 
\begin{equation*}
    \frac{\partial E}{\partial x_k} = \sum_{i=1}^m D_{ii}\frac{\partial }{\partial x_k}  F((Ax)_i) = \sum_{i=1}^m D_{ii}F'((Ax)_i) A_{ik} = (A^\intercal DF'(Ax))_k.
  \end{equation*} 
Since the $k$th component of the gradient is $\partial E/\partial x_k$, this proves the result.  
\end{proof}

\begin{prop}
Let us choose $F(\cdot)$ to be any antiderivative of $\phi(\cdot)$.  Then define
\begin{equation*}
  E_{d,1}(\theta_d) = \sum_{i\in X_{d+1}} W_{d+1}F(B_{d+1}^\intercal \theta_d)_i, \quad
  E_{d,2}(\theta_d) = \sum_{i\in X_{d-1}} F(B_{d} W_d \theta_d)_i
\end{equation*}
and let $E_d(\theta_d) = -\ip{\omega_d}{W_d\theta_d}+ E_{d,1}(\theta_d) + E_{d,2}(\theta_d)$.  Then~\eqref{eq:nl} is the flow
\begin{equation*}
  \theta_d' = -W_d^{-1} \nabla E_d(\theta_d).
\end{equation*}
(If $d=0,D$ then one of the $E_i$ terms is not defined so then $E$ is just the term that is defined.)  In particular, we have shown that our flow is a (rescaled) downhill gradient flow.
  
\end{prop}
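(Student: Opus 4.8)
The plan is to apply Lemma~\ref{lem:gradient} term by term to the three pieces of $E_d = -\ip{\omega_d}{W_d\theta_d} + E_{d,1}(\theta_d) + E_{d,2}(\theta_d)$, using linearity of the gradient together with the defining property $F' = \phi$. Since each weight matrix $W_j$ is diagonal and therefore symmetric, I will freely use $W_j^\intercal = W_j$. The whole proposition is essentially a bookkeeping exercise, and the one point demanding genuine care is keeping the various weight matrices in their correct positions so that the prefactor $W_d^{-1}$ cancels exactly where it should.

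For the ``up'' piece $E_{d,1}(\theta_d) = \mathbf{1}^\intercal W_{d+1} F(B_{d+1}^\intercal \theta_d)$, I invoke the lemma with $A = B_{d+1}^\intercal$ (so that $A^\intercal = B_{d+1}$) and $D = W_{d+1}$, evaluated at $x = \theta_d$. This gives $\nabla E_{d,1}(\theta_d) = B_{d+1} W_{d+1}\phi(B_{d+1}^\intercal \theta_d)$, and multiplying by $-W_d^{-1}$ reproduces the up-term of~\eqref{eq:nl} verbatim.

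For the ``down'' piece I read $E_{d,2}$ with the weight $W_{d-1}^{-1}$, i.e. as $\mathbf{1}^\intercal W_{d-1}^{-1} F(B_d W_d \theta_d)$, and apply the lemma with $A = B_d W_d$ and $D = W_{d-1}^{-1}$. Because $A^\intercal = W_d B_d^\intercal$, the lemma yields $\nabla E_{d,2}(\theta_d) = W_d B_d^\intercal W_{d-1}^{-1}\phi(B_d W_d \theta_d)$; the leading $W_d$ is precisely what the prefactor $-W_d^{-1}$ annihilates, leaving $-B_d^\intercal W_{d-1}^{-1}\phi(B_d W_d \theta_d)$, the down-term of~\eqref{eq:nl}. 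This cancellation of $W_d$ against $W_d^{-1}$ is the crux of the computation and the one place where a misplaced weight would break the identity, so it is the step I expect to treat most carefully. Finally, the linear term is handled directly: $-\ip{\omega_d}{W_d\theta_d} = -\omega_d^\intercal W_d\theta_d$ is linear in $\theta_d$ with gradient $-W_d\omega_d$, so that $-W_d^{-1}$ times it returns exactly $\omega_d$.

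Adding the three gradients and multiplying through by $-W_d^{-1}$ then produces precisely the middle line of~\eqref{eq:nl}. The extremal cases $d = 0$ and $d = D$ follow by the same argument: for each, one of $E_{d,1}, E_{d,2}$ is undefined (it would require an absent $B_0$/$W_{-1}$ or $B_{D+1}$/$W_{D+1}$), so $E_d$ consists of the single remaining term, and repeating the relevant computation above recovers the first and last lines of~\eqref{eq:nl} respectively. I anticipate no obstacle beyond the weight-matrix bookkeeping, with the $W_d$/$W_d^{-1}$ cancellation in the down-term being the detail to verify most carefully.
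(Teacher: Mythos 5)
Your proposal is correct and follows essentially the same route as the paper: apply Lemma~\ref{lem:gradient} to each piece of $E_d$, use the symmetry of the diagonal weight matrices, and let the prefactor $-W_d^{-1}$ cancel the leading $W_d$ produced by the down-term. The one point of divergence is your treatment of $E_{d,2}$: you read it as $\mathbf{1}^\intercal W_{d-1}^{-1}F(B_dW_d\theta_d)$ and take $D=W_{d-1}^{-1}$ in the lemma, whereas the paper applies the lemma to the literal definition $\mathbf{1}^\intercal F(B_dW_d\theta_d)$ (so $D=I$), arrives at $-B_d^\intercal\phi(B_dW_d\theta_d)$ after the cancellation, and asserts that this matches~\eqref{eq:nl} even though the down-term there carries the extra factor $W_{d-1}^{-1}$. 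Your reading is the one that actually reproduces~\eqref{eq:nl} for general weights (the two computations agree only when $W_{d-1}=I$), so you have silently repaired an omission in the stated definition of $E_{d,2}$ rather than introduced an error; everything else, including the linear term and the boundary cases $d=0,D$, is handled exactly as in the paper.
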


\begin{proof}
  Using Lemma~\ref{lem:gradient}, we have
  \begin{equation*}
    \nabla E_{d,1}(\theta_d) = B_{d+1}W_{d+1}\phi(B_{d+1}^\intercal \theta_d),\quad  \nabla E_{d,2}(\theta_d) = W_dB_d^\intercal\phi(B_dW_d \theta_d),   
  \end{equation*}
  (recalling that $W_d^\intercal = W_d$).  Then we have
  \begin{align*}
    -W_0^{-1} \nabla (-\ip{\omega_0}{W_0\theta_0} + E_{0,1}(\theta_0)) &= \omega_0 + W_0^{-1}B_1W_1\phi(B_1^\intercal \theta_0),\\
    -W_D^{-1} \nabla (-\ip{\omega_D}{W_D\theta_D} + E_{D,2}(\theta_D)) &= \omega_d + W_D^{-1}W_D B_D^\intercal \phi(B_DW_D\theta_D),
  \end{align*}
  and for any $0<d<D$ we have
  \begin{align*}
    -W_d^{-1} &\nabla(-\ip{\omega_d}{W_d\theta_d} + E_{d,1}(\theta_d)+ E_{d,2}(\theta_d)) = \\
    &= \omega_d - W_d^{-1}B_{d+1}W_{d+1}\phi(B_{d+1}^\intercal \theta_d) - W_d^{-1}W_dB_d^\intercal\phi(B_dW_d \theta_d),
  \end{align*}
and this matches the right-hand sides of~\eqref{eq:nl}.
\end{proof}

\subsection{Linearization around solutions}\label{sec:linearization}

In this section we compute the linearization around a generic fixed point of~\eqref{eq:nl} and in the process show that the synchronous solution $\theta_d \equiv {\bf 0}$ is dynamically stable --- in fact we will show this for a broader class of solutions that contain ${\bf 0}$.  We will typically denote fixed points of~\eqref{eq:nl} as $x_d$, contrasting to $\theta_d$ for general solutions, and maintain this convention below.

\begin{thm}\label{thm:linearization}
  If $x_d$ is a fixed point of~\eqref{eq:nl}, then the linearization around $x_d$ is the operator $L_{x_d}\colon \R^{\av{X_d}} \to \R^{\av{X_d}}$ given by the formula:
  \begin{equation*}
    L_{x_d}:=-\left[W_d^{-1}B_{d+1}W_{d+1}\diag(f'(B_{d+1}^\intercal x_d))B_{d+1}^\intercal + B_d^\intercal W_{d-1}^{-1} \diag(f'(B_dW_dx_d))(B_dW_d)\right],
  \end{equation*}
  where $\diag(v)$ is the square matrix with the components of $v$ along the diagonal, and we are interpreting $f'(\cdot)$ componentwise on its argument.
  
   As always, for $d=0,D$ we only consider the one of the two terms that is defined (e.g. for $d=0$ we take only the first term above, and for $d=D$ only the second).

    \end{thm}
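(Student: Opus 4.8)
The plan is to compute the Jacobian of the right-hand side of~\eqref{eq:nl} directly, treating each of the two terms (the ``up'' term and the ``down'' term) separately, since differentiation is linear. For a general $0<d<D$, the right-hand side is $\omega_d - W_d^{-1}B_{d+1}W_{d+1}\phi(B_{d+1}^\intercal \theta_d) - B_d^\intercal W_{d-1}^{-1}\phi(B_dW_d\theta_d)$. The additive constant $\omega_d$ contributes nothing to the derivative, so I only need to linearize the two nonlinear composite maps. Because the matrices $W_d^{-1}, B_{d+1}, W_{d+1}$ (respectively $B_d^\intercal, W_{d-1}^{-1}$) are constant linear maps, the chain rule reduces the problem to differentiating the single componentwise map $\phi$ sandwiched between them.

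The key computation is the following: for a fixed matrix $M$ and the componentwise map $\phi$, the derivative of $\theta \mapsto \phi(M\theta)$ at a point $\theta = x$ is the linear operator $\diag(\phi'(Mx))\,M$, where $\phi'(Mx)$ is the vector obtained by applying the scalar derivative $\phi'$ to each component of $Mx$. This is immediate from the componentwise structure: the $i$th component $\phi((M\theta)_i)$ has partial derivative $\phi'((M\theta)_i)(M)_{ik}$ with respect to $\theta_k$, which assembles exactly into the matrix $\diag(\phi'(M\theta))M$. Applying this with $M = B_{d+1}^\intercal$ for the first term (and premultiplying by the constant $-W_d^{-1}B_{d+1}W_{d+1}$) yields
\begin{equation*}
  -W_d^{-1}B_{d+1}W_{d+1}\diag(\phi'(B_{d+1}^\intercal x_d))B_{d+1}^\intercal,
\end{equation*}
and applying it with $M = B_dW_d$ for the second term (premultiplying by $-B_d^\intercal W_{d-1}^{-1}$) yields
\begin{equation*}
  -B_d^\intercal W_{d-1}^{-1}\diag(\phi'(B_dW_dx_d))(B_dW_d).
\end{equation*}
Summing the two recovers exactly the claimed formula for $L_{x_d}$ (with $f$ read as the paper's $\phi$).

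The extremal cases $d=0$ and $d=D$ follow by the same computation after dropping the undefined term, as the remark preceding the theorem already justifies: for $d=0$ there is no ``down'' term and only the first summand survives, and for $d=D$ only the second survives. I would remark that the linearization is evaluated at a genuine fixed point $x_d$, but this plays no role in the derivation itself---the Jacobian formula holds at every point, and fixedness of $x_d$ is only what makes ``linearization around $x_d$'' the relevant object dynamically. I do not anticipate a serious obstacle here: the entire argument is an application of the chain rule together with the elementary componentwise derivative lemma above. The only thing to be careful about is bookkeeping---keeping the constant pre- and post-multiplying matrices in the correct order and confirming that $\diag(\phi'(\cdot))$ lands between the right factors---so the main (minor) risk is a transcription slip rather than a conceptual difficulty.
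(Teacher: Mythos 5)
Your proposal is correct and follows essentially the same route as the paper: the paper writes $\theta_d = x_d + \epsilon z_d$ and Taylor-expands $\phi(x+\epsilon y)=\phi(x)+\epsilon\,\phi'(x)\circ y$, then converts the Schur product to $\diag(\phi'(\cdot))$, which is exactly your componentwise-derivative lemma phrased as a first-order expansion rather than a Jacobian. Your observation that fixedness of $x_d$ plays no role in the computation itself is also consistent with the paper's argument.
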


\begin{proof}
  Consider a function $\phi\colon \R^n\to \R^n$ defined componentwise from a scalar $\phi\colon\R\to \R$, and compute the Taylor expansion of $\phi(x + \epsilon y)$ in $\epsilon$.  Note that the $i$th component of $\phi(x+\epsilon y)$ is $\phi(x_i + \epsilon y_i)$ and we have 
  \begin{equation*}
    \phi(x_i+\epsilon y_i) = \phi(x_i)  + \epsilon \phi'(x_i)y_i,
  \end{equation*}
  and so in general 
  \begin{equation*}
    \phi(x+\e y) = \phi(x) + \e \phi'(x)\circ y,
  \end{equation*}
  where $\circ$ denotes the Schur product of vectors ($(a\circ b)_i := a_i\cdot b_i$).  But also note that we can write 
  \begin{equation*}
    (a\circ b) = \diag(a)\cdot b,
  \end{equation*}
  where $\diag(a)$ is the diagonal matrix whose components are those of $a$.

  Pick $x_d$ a fixed point of~\eqref{eq:nl} and write $\theta_d = x_d + \epsilon z_d$.  We will compute each term separately. First we have:
  \begin{align*}
  W_d^{-1}B_{d+1}W_{d+1}&\phi(B_{d+1}^\intercal (x_d + \e z_d))	
  = W_d^{-1}B_{d+1}W_{d+1}\phi(B_{d+1}^\intercal x_d + \e B_{d+1}^\intercal z_d)\\
  	&= W_d^{-1}B_{d+1}W_{d+1}\left(\phi(B_{d+1}^\intercal x_d) + \e \phi'(B_{d+1}^\intercal x_d) \circ B_{d+1}^\intercal z_d\right) + O(\e^2)\\
	&= W_d^{-1}B_{d+1}W_{d+1} \phi(B_{d+1}^\intercal x_d) + \e W_d^{-1}B_{d+1}W_{d+1}\phi'(B_{d+1}^\intercal x_d) \circ B_{d+1}^\intercal z_d+ O(\e^2)\\
	&=W_d^{-1}B_{d+1}W_{d+1} \phi(B_{d+1}^\intercal x_d) + \e W_d^{-1}B_{d+1}W_{d+1}\diag(\phi'(B_{d+1}^\intercal x_d)) B_{d+1}^\intercal z_d+ O(\e^2).
  \end{align*}
  
Similarly, we have
\begin{align*}
  B_d^\intercal W_{d-1}^{-1}&\phi(B_dW_d\theta_d)
   = B_d^\intercal W_{d-1}^{-1}\phi(B_dW_d(x_d  +\e z_d))\\
   &=B_d^\intercal W_{d-1}^{-1}\phi(B_dW_d x_d  +\e B_dW_d z_d))\\
   &= B_d^\intercal W_{d-1}^{-1}\phi(B_dW_d x_d) + \e B_d^\intercal W_{d-1}^{-1}\phi'(B_dW_d x_d)\circ B_dW_dz_d+ O(\e^2)\\
   &=B_d^\intercal W_{d-1}^{-1}\phi(B_dW_d x_d) + \e B_d^\intercal W_{d-1}^{-1}\diag(\phi'(B_dW_d x_d)) B_dW_dz_d+ O(\e^2)\\
\end{align*}
In each of these equations, the first term is the nonlinear term evaluated at $x_d$, and the $O(\e)$ term will give the linearization.  Note that in each equation the $O(\e)$ term is a linear operator acting on $z_d$, and this linear operator matches the one in the statement above.   As always, for $d=0,D$ we only consider one of these terms.
   
\end{proof}

\begin{cor}
For the homogeneous system (i.e. if we choose $\omega_d=0$ in~\eqref{eq:nl}), and any vector $x_d\in\ker B_{d+1}^{\intercal} \cap \ker B_d$, then $L_{x_d}$ is the same as in~\eqref{eq:l}: a negative scalar multiple of the combinatorial Laplacian, and is thus negative semidefinite.

\end{cor}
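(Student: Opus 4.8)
The plan is to read the corollary as a direct specialization of Theorem~\ref{thm:linearization}: first I would confirm that such an $x_d$ is genuinely a fixed point so that the theorem applies, and then I would observe that both $\diag$ factors occurring in $L_{x_d}$ collapse to $\phi'(0) I$, leaving exactly the Laplacian of~\eqref{eq:l}.

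First I would check the fixed-point condition. With $\omega_d = 0$, the right-hand side of~\eqref{eq:nl} vanishes exactly when $\phi(B_{d+1}^\intercal x_d) = \mathbf 0$ and $\phi(B_d W_d x_d) = \mathbf 0$. Since $\phi(0) = 0$, it suffices that $B_{d+1}^\intercal x_d = \mathbf 0$ and $B_d W_d x_d = \mathbf 0$. The first is precisely $x_d \in \ker B_{d+1}^\intercal$; the second is the harmonicity condition in the weighted setting (in the unweighted case $W_d = I$ it is literally $x_d \in \ker B_d$, and in general the correct reading is $x_d \in \ker(B_d W_d)$). Hence any $x_d$ in the stated intersection is a fixed point and Theorem~\ref{thm:linearization} applies.

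Next I would substitute into the formula for $L_{x_d}$. Because $B_{d+1}^\intercal x_d = \mathbf 0$, every entry of the argument of $f'$ in the first term is zero, so $\diag(f'(B_{d+1}^\intercal x_d)) = f'(0) I = \phi'(0) I$ (the scalar $f$ of the theorem is the nonlinearity $\phi$, and $\phi'(0) > 0$ by hypothesis). The identical reasoning applied to $B_d W_d x_d = \mathbf 0$ gives $\diag(f'(B_d W_d x_d)) = \phi'(0) I$. Pulling the common scalar out front then yields
\begin{equation*}
  L_{x_d} = -\phi'(0)\left(W_d^{-1} B_{d+1} W_{d+1} B_{d+1}^\intercal + B_d^\intercal W_{d-1}^{-1} B_d W_d\right) = -\phi'(0)\,\mc L_d,
\end{equation*}
which is exactly the operator appearing in~\eqref{eq:l}.

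Finally, negative semidefiniteness is immediate from the result of~\cite{horak2013spectra} already quoted above: $\mc L_d$ is positive semidefinite whenever the weights are nonnegative, so $-\phi'(0)\mc L_d$ is negative semidefinite. I do not expect a serious obstacle, since this is essentially a substitution argument. The only point requiring care is the bookkeeping in the down term: the condition that forces the second diagonal factor to reduce to $\phi'(0) I$ is $B_d W_d x_d = \mathbf 0$ rather than $B_d x_d = \mathbf 0$. These coincide in the unweighted case, but in the weighted case the hypothesis should be read as $x_d \in \ker(B_d W_d)$, which is also precisely the kernel of $\mc L_d$ when semidefiniteness is measured in the weighted inner product $\langle x, y\rangle_{W_d} = x^\intercal W_d y$ under which $\mc L_d$ is self-adjoint.
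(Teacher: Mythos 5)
Your proof is correct and follows essentially the same route as the paper: substitute the kernel conditions into the linearization formula of Theorem~\ref{thm:linearization} so that both $\diag(f'(\cdot))$ factors collapse to $\phi'(0)I$, pull out the scalar to recover $-\phi'(0)\mc L_d$, and invoke~\cite{horak2013spectra} for semidefiniteness. Your closing remark that the down term really requires $x_d\in\ker(B_dW_d)$ rather than $\ker B_d$ is a valid and worthwhile refinement of the stated hypothesis --- the paper glosses over this in the corollary but adopts exactly that reading when it later defines homological solutions as elements of $\ker B_{d+1}^{\intercal}\cap\ker B_dW_d$.
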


\begin{proof}
  If $x_d\in\ker B_{d+1}^{\intercal} \cap \ker B_d$, then both $\diag(f'(B_{d+1}^\intercal x_d))$ and $\diag(f'(B_dW_dx_d))$ are scalar multiples of the identity with multiplier $\phi'(0)$.  Therefore
\begin{align*}
  L_{x_d} &= -\left[W_d^{-1}B_{d+1}W_{d+1}\phi'(0)I_{\av{X_{d+1}}}B_{d+1}^\intercal + B_d^\intercal W_{d-1}^{-1} \phi'(0)I_{\av{X_{d-1}}}(B_dW_d)\right]\\
  	&= -\phi'(0) \left[W_d^{-1}B_{d+1}W_{d+1}B_{d+1}^\intercal + B_d^\intercal W_{d-1}^{-1} B_dW_d\right] \\&= -\phi'(0)\mc L_d.
\end{align*}
  
\end{proof}

\begin{definition}
  Solutions $x_d\in\ker B_{d+1}^{\intercal} \cap \ker B_dW_d$ will be referred to as {\bf homological} solutions, since they form (one choice of) a basis for $H_d(X)$.   
\end{definition}

To motivate the name for these solutions:  recall that $H_d(X)$ is defined as the quotient $\ker\partial_d/\im\partial_{d+1}$.  Now consider $y_d\in \ker B_{d+1}^{\intercal} \cap \ker B_d$.  Using the rank-nullity theorem, $\ker B_{d+1}^\intercal = (\im (B_{d+1}))^\perp$.  Thus we can decompose $\ker B_d \cong \im B_{d+1}\oplus (\im B_{d+1})^\perp$ and we see that we can use elements of $\ker B_{d+1}^{\intercal} \cap \ker B_d$ as a basis for $H_d(X)$.  Of course, in the general case, as we have defined above, the homological solutions live in $\ker B_{d+1}^{\intercal} \cap \ker B_dW_d$, but since all weights are positive, this won't change the dimensions of the set (just possibly rescale some of the entries).

\begin{cor}\label{cor:constant}
  Let $x_d$ be a solution with the property that $f'(B_{d+1}^{\intercal}x_d)$ and $f'(B_d W_d x_d)$ are both constant vectors with positive entries.  Then $L_{x_d}$ is negative semi-definite.
\end{cor}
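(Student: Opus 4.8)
The plan is to substitute the hypothesis directly into the formula of Theorem~\ref{thm:linearization}, collapsing the two diagonal factors to scalar multiples of the identity exactly as in the preceding corollary, and then to verify semidefiniteness in the inner product weighted by $W_d$ --- the natural metric in which the rescaled gradient flow $\theta_d' = -W_d^{-1}\nabla E_d$ of Section~\ref{sec:gradient} becomes an honest gradient flow. By assumption $f'(B_{d+1}^\intercal x_d) = c_1 {\bf 1}$ and $f'(B_d W_d x_d) = c_2 {\bf 1}$ for scalars $c_1, c_2 > 0$, so $\diag(f'(B_{d+1}^\intercal x_d)) = c_1 I$ and $\diag(f'(B_d W_d x_d)) = c_2 I$, and the formula of Theorem~\ref{thm:linearization} reduces to
\begin{equation*}
  L_{x_d} = -\left[c_1\, W_d^{-1}B_{d+1}W_{d+1}B_{d+1}^\intercal + c_2\, B_d^\intercal W_{d-1}^{-1} B_d W_d\right].
\end{equation*}
(For $d=0,D$ only one bracketed term survives, and the argument below applies verbatim to that single term.)

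The key conceptual point is that $L_{x_d}$ is \emph{not} symmetric in the standard inner product --- the factor $W_d^{-1}$ on the left breaks this --- so ``negative semi-definite'' should be read relative to $\ip{u}{v}_{W_d} := u^\intercal W_d v$. Using $W_d^\intercal = W_d$ and $W_{d-1}^\intercal = W_{d-1}$, a short computation gives $W_d L_{x_d} = L_{x_d}^\intercal W_d$, so $L_{x_d}$ is self-adjoint for $\ip{\cdot}{\cdot}_{W_d}$. I would then evaluate the associated quadratic form: for any $u$,
\begin{equation*}
  \ip{L_{x_d}u}{u}_{W_d} = u^\intercal W_d L_{x_d} u = -c_1\, (B_{d+1}^\intercal u)^\intercal W_{d+1}(B_{d+1}^\intercal u) - c_2\,(B_d W_d u)^\intercal W_{d-1}^{-1}(B_d W_d u).
\end{equation*}
Since all weights are positive, $W_{d+1}$ and $W_{d-1}^{-1}$ are diagonal with positive entries, and $c_1,c_2>0$; hence each summand is a non-positive multiple of a squared weighted norm and the whole expression is $\le 0$, which is precisely negative semidefiniteness in $\ip{\cdot}{\cdot}_{W_d}$.

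I expect the only genuine obstacle to be this conceptual one --- identifying the correct notion of definiteness for a non-symmetric operator --- rather than any computational difficulty. An equivalent and perhaps cleaner route, which sidesteps the choice of metric, is to conjugate by $W_d^{1/2}$: setting $A = W_d^{-1/2}B_{d+1}W_{d+1}^{1/2}$ and $C = W_d^{1/2}B_d^\intercal W_{d-1}^{-1/2}$, one checks
\begin{equation*}
  W_d^{1/2}(-L_{x_d})W_d^{-1/2} = c_1\, A A^\intercal + c_2\, C C^\intercal,
\end{equation*}
which is manifestly symmetric and positive semi-definite in the standard inner product. Thus $-L_{x_d}$ is similar to a positive semi-definite matrix, so its eigenvalues are real and non-negative and those of $L_{x_d}$ are non-positive. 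This recovers the preceding corollary as the special case $c_1 = c_2 = \phi'(0)$, where the bracketed operator is exactly $\mc{L}_d$.
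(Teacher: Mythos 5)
Your proposal is correct, and its skeleton is the same as the paper's: substitute the hypothesis into the formula of Theorem~\ref{thm:linearization} so that the two diagonal factors collapse to $c_1 I$ and $c_2 I$, leaving $L_{x_d}$ as a negative combination of the weighted up- and down-Laplacians. Where you diverge is at the final step: the paper simply cites~\cite{horak2013spectra} for the positive semidefiniteness of each of the operators $W_d^{-1}B_{d+1}W_{d+1}B_{d+1}^\intercal$ and $B_d^\intercal W_{d-1}^{-1}B_dW_d$, whereas you supply the argument yourself. Your observation that these matrices are not symmetric in the standard inner product, and that semidefiniteness must be read relative to $\ip{u}{v}_{W_d}=u^\intercal W_d v$ (equivalently, via conjugation by $W_d^{1/2}$ to the manifestly PSD form $c_1AA^\intercal+c_2CC^\intercal$), is exactly the content hidden inside that citation --- Horak and Jost define their Laplacians as self-adjoint operators for weighted inner products on the chain groups. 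Both of your computations (the quadratic form $-c_1\,(B_{d+1}^\intercal u)^\intercal W_{d+1}(B_{d+1}^\intercal u)-c_2\,(B_dW_du)^\intercal W_{d-1}^{-1}(B_dW_du)\le 0$, and the similarity transform) check out. What your route buys is self-containedness and a precise statement of what ``negative semi-definite'' means here, a point the paper leaves implicit; what the paper's route buys is brevity. No gaps.
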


\begin{proof}
This follows pretty directly from Theorem~\ref{thm:linearization}, since we have
\begin{equation*}
  L_{x_d}  = - \alpha W_d^{-1}B_{d+1}W_{d+1}B_{d+1}^\intercal - \beta B_d^\intercal W_{d-1}^{-1} (B_dW_d),
\end{equation*}
for some $\alpha,\beta > 0$, and each of these individual operators are positive semidefinite by~\cite{horak2013spectra}.
\end{proof}

\begin{rem}
However, this operator is typically singular, and might have multiple zero eigenvalues.  So while there are no linearly unstable nodes, it is not clear at this point that the flow is asymptotically stable.  We address that in the next section.
\end{rem}

\subsection{Energy, convexity, and stability}\label{sec:convexity}

Let $F(\cdot)$ be the antiderivative of $\phi(\cdot)$ with $F(0)=0$.  We always assume that $\phi(0) = 0$ and $\phi'(0)>0$, and therefore $F(0) = F'(0) = 0$ and $F''(0)>0$.  In particular, $F$ is locally strictly convex at 0.

Consider what the energy functional looks like in a neighborhood of ${\bf 0}\in C_d(X)$. First note that in general there will be ``flat'' directions given by the subspace of homological solutions, but otherwise the energy must grow away from ${\bf 0}$.  More precisely, let $z\in C_d(X)$ be decomposed as $z = h + p$, where $h$ is a homological solution and $p$ is orthogonal to the set of homological solutions.  Then either the vectors $B_{d+1}^\intercal z$ or $B_dW_dz$ will have some nonzero entries, or $p=0$.  Since $F$ is locally strictly convex, this implies that for some $\e \ll 1$, $E(\e z) >0$ or $z$ is itself a homological solution.  In particular, while $E(\cdot)$ might be weakly convex in a neighborhood of ${\bf 0}$, it is infinitely flat in the subspace corresponding to homological solutions, and is strictly increasing in the orthogonal direction.  It follows that if we choose as initial condition any perturbation of ${\bf 0}$, it will relax to the subspace of homological solutions exponentially fast.

Let us emphasize that this analysis works only for ${\bf 0}$ (or, more generally, homological solutions) using convexity.  If we consider a general fixed point $x_d\in C_d(X)$, then there is no guarantee that  a fixed point  will be stable, since we the energy is no longer a sum of convex functions.

\subsection{Existence, or lack thereof, of attracting fixed points}\label{sec:compactness}

We have given examples above of simplicial complexes and choices of $\omega_d$ such that~\eqref{eq:nl} has an attracting fixed point.  A natural question to ask is whether there are cases where there are no fixed points at all.

First  note that it is possible to choose any $x_d$ to be a fixed point of~\eqref{eq:nl}, with a judicious choice of $\omega_d$.  Specifically, choose any $x_d\in C_d(X)$, and let 
\begin{equation*}
  \omega_d := W_d^{-1}B_{d+1}W_{d+1}\phi(B_{d+1}^\intercal x_d) + B_d^\intercal W_{d-1}^{-1}\phi(B_dW_dx_d),
\end{equation*}
(with the obvious modification if $d=0,D$), and then $x_d$ is a fixed point of~\eqref{eq:nl}.  We also see that the converse is true:  there exists a fixed point for~\eqref{eq:nl} iff $\omega_d$ is in the range of the operator
\begin{equation}\label{eq:nlop}
  W_d^{-1}B_{d+1}W_{d+1}\phi(B_{d+1}^\intercal \cdot) + B_d^\intercal W_{d-1}^{-1}\phi(B_dW_d\cdot)
\end{equation}

We now see that there are choices of $\omega_d$ so that~\eqref{eq:nl} has no fixed point in many settings.   

\begin{prop}
  Assume that $\phi(\cdot)$ is a continuous and bounded function.  Then the range of~\eqref{eq:nlop} 
is a compact subset of $C_d(X)$.
\end{prop}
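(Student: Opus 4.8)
The plan is to establish compactness in its two constituent parts, \emph{bounded} and \emph{closed}, treating them separately. First I would record a factorization of the operator in~\eqref{eq:nlop}. Writing $n=\av{X_d}$, let $T\colon\R^{n}\to\R^{\av{X_{d+1}}}\oplus\R^{\av{X_{d-1}}}$ be the linear map $T\theta_d=(B_{d+1}^\intercal\theta_d,\;B_dW_d\theta_d)$, let $\Phi$ be the componentwise application of $\phi$ on $\R^{\av{X_{d+1}}}\oplus\R^{\av{X_{d-1}}}$, and let $S(a,b)=W_d^{-1}B_{d+1}W_{d+1}a+B_d^\intercal W_{d-1}^{-1}b$ be the outer linear map. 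Then~\eqref{eq:nlop} is $N=S\circ\Phi\circ T$, and writing $V:=\im T$ (a linear subspace, hence closed) we have $\range N=S(\Phi(V))$.

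Boundedness is immediate: since $\phi$ is bounded, $\Phi$ takes values in a fixed box $\prod_i[\inf\phi,\sup\phi]$, so $\Phi(V)$ is bounded and its image under the Lipschitz map $S$ is bounded. In finite dimensions this already gives that $\range N$ is relatively compact, so the entire content of the statement is the \textbf{closedness} of $\range N$, and this is the step I expect to be the main obstacle. It is genuinely delicate and is \emph{not} a formal consequence of continuity and boundedness: on the single-edge graph with $d=0$ and $\phi=\arctan$, one computes $\range N=\{(-s,s):\av s<\pi/2\}$, an open segment that is bounded but not closed. The obstruction is the behaviour of $\phi$ along the \emph{unbounded} directions of $V$: the limiting values $\pm\pi/2$ are approached but never attained at finite $\theta_d$.

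To obtain genuine closedness I would exploit the arithmetic structure of the boundary maps together with periodicity of $\phi$, which is exactly the regime of the paper's motivating example $\phi=\sin$. Suppose $\phi$ has period $p$, so that $\phi=\bar\phi\circ\pi_1$ for a continuous $\bar\phi\colon\R/p\mathbb{Z}\to\R$ and quotient $\pi_1\colon\R\to\R/p\mathbb{Z}$; componentwise this gives $\Phi=\bar\Phi\circ\pi$ with $\pi\colon\R^{m}\to\mathbb T^{m}:=(\R/p\mathbb{Z})^{m}$ and $\bar\Phi$ continuous on the compact torus. The key point is that $V=\im T$ is a \emph{rational} subspace: $B_{d+1}^\intercal$ has integer entries, and since $W_d$ is invertible one has $\im(B_dW_d)=\im B_d$, again rational. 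In the unweighted case $T$ is integral, so $\pi(V)$ is a closed subgroup — a subtorus — of $\mathbb T^{m}$, hence compact; therefore $\Phi(V)=\bar\Phi(\pi(V))$ is the continuous image of a compact set, hence compact, and $\range N=S(\Phi(V))$ is compact as well. The feature that rescues closedness here, in contrast to the $\arctan$ example, is that every point of $\pi(V)$ is already attained by some \emph{finite} $\theta_d$, so passing to this compactification introduces no spurious boundary values.

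The remaining gap is the weighted case: for the coupled map $T$ the subspace $V\subseteq\R^{\av{X_{d+1}}}\oplus\R^{\av{X_{d-1}}}$ is rational only when the weights are commensurate with the period $p$ (otherwise $\pi(V)$ can be a dense, non-closed one-parameter subgroup, by the usual irrational-winding phenomenon). I would therefore state the clean hypothesis under which the theorem holds as written — $\phi$ periodic with the weights rational multiples of the period — and record the more general sufficient condition that also covers non-periodic nonlinearities: that $\phi$ extend continuously to the two-point compactification $[-\infty,+\infty]$ \emph{and} attain its one-sided limits at finite arguments (for example $\phi$ eventually constant), so that again no boundary value is lost. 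Under plain continuity and boundedness one obtains only relative compactness, the $\arctan$ computation being the obstruction; genuine compactness of the range is precisely the assertion that the relevant reduction of $V$ is already compact, which is what periodicity together with the integrality of the boundary operators supplies.
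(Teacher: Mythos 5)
Your proposal is correct, and it is considerably more careful than the paper's own argument, which consists in its entirety of the parenthetical remark that the range of~\eqref{eq:nlop} is contained in a Cartesian product of closed intervals. That remark establishes exactly your first step — boundedness, hence relative compactness of the range — and nothing more. Your central observation is therefore a genuine one: continuity and boundedness of $\phi$ alone do not make the range closed, and your single-edge $\arctan$ computation is a valid counterexample to the proposition as literally stated (the range $\left\{(-s,s):\av{s}<\pi/2\right\}$ is bounded but not closed). So the statement should be read as ``the range is contained in a compact subset of $C_d(X)$,'' which is all the paper's proof delivers and all the subsequent application needs: if the range is bounded, the set of $\omega_d$ admitting fixed points is bounded, hence most $\omega_d$ admit none. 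Your repair for the motivating Kuramoto regime is also sound and is not in the paper at all: for periodic $\phi$ and integral boundary matrices the joint image $V=\im T$ is a rational subspace, its projection to the torus is a closed subtorus, and genuine compactness of $\range N=S(\Phi(V))$ follows by continuity; your caveat that incommensurate weights can make $\pi(V)$ a dense non-closed winding is the correct obstruction to extending this.

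One side remark in your proposal is too weak as stated: requiring only that $\phi$ extend continuously to $[-\infty,+\infty]$ \emph{and} attain its one-sided limits at some finite arguments does not suffice for a general subspace $V$. Take $V=\{(t,\alpha t):t\in\R\}\subset\R^2$ with $\alpha>0$, $\alpha\neq 1$, and let $\phi$ have limit $1$ at $+\infty$, attained only at a single point $t_+\neq 0$, with $\phi<1$ elsewhere. Then $(1,1)$ lies in the closure of $\Phi(V)$ (send $t\to+\infty$) but is attained only if $\phi(t)=\phi(\alpha t)=1$, forcing $t=t_+$ and $\alpha t=t_+$ simultaneously, which is impossible. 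Your parenthetical instance — $\phi$ eventually constant — does work (the relevant constraint sets are polyhedra, and linear images of polyhedra are closed, so no boundary value is lost), so that is the version you should record.
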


(To prove this, note that the range of~\eqref{eq:nlop} is contained in a Cartesian product of closed intervals.)  Clearly, if the range of~\eqref{eq:nlop} is compact, then the set of $\omega_d$ that give a fixed point is also compact, and thus most $\omega_d$ do not give rise to fixed points.    One example commonly considered in network models is the Kuramoto model with $\phi(\cdot) = \sin(\cdot)$.  It is natural to ask whether one can characterize the set of $\omega_d$ that give stable fixed points for~\eqref{eq:nl}, but we expect this question to be challenging, as it is even for the Kuramoto model~\cite{Bronski.DeVille.Park.12, Bronski.DeVille.Ferguson.16, Bronski.Carty.DeVille.17, Bronski.Ferguson.18}.

\section{Examples}\label{sec:examples}

\subsection{Twist-like solutions and multistability}\label{sec:twistlike}

In this section we describe a class of solutions that are not homological solutions, and give examples of complexes on which such solutions can be dynamically stable.  One way to construct such solutions can be seen at the $1$-simplex level, for example.   We will refer to these solutions as ``twist-like'' solutions in analogy to the well-known twist solutions for the Kuramoto system.  

To understand the twist solutions, let us consider the vertex flow in~\eqref{eq:K} with $\phi(\cdot)$ chosen to be $\sin(\cdot)$, namely
\begin{equation}\label{eq:Ksin}
  \frac{d}{dt}\eta_i = \zeta_i + \sum_{j=1}^n \gamma_{ij} \sin(\theta_j-\theta_i).
\end{equation}
Now assume that the weights $\gamma_{ij}$  are circulant; more specifically, let $V = \{0,\dots,n-1\}$ and assume $\gamma_{ij}$ is a function only of $\av{i-j}$.  One sees directly that every configuration of the form $\zeta^{(p)}$ where $\zeta^{(p)}_k = 2\pi pk/n$ is a fixed point for~\eqref{eq:Ksin} (and, in fact, this will be true for any odd $\phi(\cdot)$.  It is natural to ask which choices of $\gamma$ and $p$ make the twist solution stable.  This question was originally raised in the seminal work of~\cite{Wiley.Strogatz.Girvan.06}, and there have been a number of papers expanding on this question in recent years~\cite{DeVille.12, Dorfler.Chertkov.Bullo.13, medvedev2014small, Mehta.etal.14, Mehta.etal.15, DeVille.Ermentrout.16, Delabays.Coletta.Jacquod.16, Delabays.Coletta.Jacquod.17, Ferguson.18, townsend2020dense}, and, not surprisingly, is related to the stability properties of the graph Laplacian. 

One concrete example we can consider is a case considered in~\cite{Wiley.Strogatz.Girvan.06}; assume that for some $k < \lfloor n/2\rfloor$, we have $\gamma_{ij} = 1$ if $\av{i-j} \le k$ and $0$ otherwise.  The components of $B_1^\intercal\theta^{(p)}$ are of the form $\theta^{(p)}_i - \theta^{(p)}_j$, so that $B_1^\intercal\theta^{(p)}$  has at most $2k$ distinct values of a simple form: for example, if $k=1$ then all components of $B_1^\intercal\theta^{(p)}$ are either $2\pi (p/n)$ or $2\pi(1-p/n)$; if $k=2$ then they are all of the form $2\pi(p/n), 2\pi(2p/n), 2\pi(1-p/n), 2\pi(1-2p/n)$, etc.  We also note that $\sin(2\pi x) = \sin(2\pi (1-x))$, so in fact $\phi(B_1^\intercal\theta^{(p)})$ will only take $k$ distinct values, independent of $n$.

This structure allows us a lot of power in solving for fixed points and demonstrating their stability, using Corollary~\ref{cor:constant}.  One concrete example that we can develop is as follows:  let us consider the $2$-complex $X$ with $X_0 = \{0,1,\dots, n-1\}$, $X_1$ is the edges of the form $(i,j)$ with $j=i+1$, and $X_2$ is made up of all triangles of the form $0\le i < j < k < n$.  Let us assume that all weights are one throughout.  Let us find a solution to the simultaneous system
\begin{equation}\label{eq:B1B2first}
  B_1 x_1 = (\alpha,\alpha,\dots, \alpha, \alpha-2\pi),\quad B_2^\intercal x_1 = {\bf 0}.
\end{equation}
Note that if such a solution exists, we then have that $f(B_1 x_1) = f(\alpha){\bf 1}$ and note that ${\bf 1}\in \ker B_1$.  Therefore $x_1$ is a fixed point for the edge flow.  Moreover, note that we have $f'(B_1 x_1) = \cos(\alpha){\bf 1}$ and $f'(B_2^\intercal x_1) = {\bf 1}$.  Therefore, using Corollary~\ref{cor:constant}, we have that $x_1$ is stable iff $\cos(\alpha) >0$.

We show now that for any $n$, there exists a solution of this type with $\alpha = 2\pi/n$, and thus for $n>4$ we have obtained a stable twist-like solution.  Let us choose $\xi$ so that $\xi_{i,j} = 1/2(n-i-j)(i-j)$.  First note that
\begin{equation*}
  (B_1W_1{\xi})_{i,i+1} = \begin{cases}  {\xi}_{i-1,i}-{\xi}_{i,i+1},& 0<i<n-1,\\-{\xi}_{0,n-1}-{\xi}_{0,1},&i=0,\\{\xi}_{n-2,n-1} + {\xi}_{0,n-1},&i=n-1.\end{cases}
\end{equation*}
(Basically the two bottom cases come from the fact that ${\xi}_{n-1,0}$ is not represented in the complex, but if we think of it as $-{\xi}_{0,n-1}$ then the formulas follow a simple rule.)  We then check that
\begin{align*}
  {\xi}_{i-1,i}-{\xi}_{i,i+1} &= \frac12(n-2i+1)-\frac12(n-2i-1) = 1,\\
  -{\xi}_{0,n-1}-{\xi}_{0,1} &= -\frac{n-1}2-\frac{n-1}2 = 1-n,\quad
  {\xi}_{n-2,n-1} + {\xi}_{0,n-1} = \frac{3-n}2 + \frac{n-1}2 = 1.
\end{align*}
From this we see that 
\begin{equation*}
  (B_1W_1{\xi}) = (1,1,\dots, 1-n).
\end{equation*}
Similarly, we see that the value of $B_2^\intercal\xi$ on the triangle $f_{i,j,k}$ is
\begin{align*}
  -\xi_{i,j} + \xi_{i,k} - \xi_{j,k} 
  &= -\frac12\left((n-i-j)(i-j) - (n-j-k)(j-k)+(n-i-k)(i-k)\right)\\
  &=-\frac12\left(n(i-j) - (i^2-j^2) - (n(i-k) - (i^2-k^2)) + (n(j-k) - (j^2-k^2)\right)\\
  &= -\frac12\left(n(i-j-i+k_j-k) - (i^2-j^2-i^2+k^2+j^2-k^2)\right) = 0.
\end{align*}
Putting this together we have
\begin{equation*}
  B_2^\intercal \xi = {\bf 0},\quad (B_1W_1\xi) = (1,1,\dots, 1-n).
\end{equation*}
Choosing $x_1 = \alpha \xi$ with $\alpha = 2\pi/n$ gives
\begin{equation*}
  B_2^\intercal x_1 = {\bf 0},\quad (B_1W_1x_1) = (\alpha,\alpha,\dots,\alpha-2\pi).
\end{equation*}

Choosing the last component to be the different one in~\eqref{eq:B1B2first} was arbitrary, but it is apparent that there is nothing special about choosing the last component; the system is clearly invariant under any $n$-cycle permutation and we could choose the ``jump'' to be wherever we like.

\subsection{Maximal dimensional flows}\label{sec:maximal}

Here we are considering the $d=D$ flow
\begin{equation*}
\frac{d}{dt} \theta_{D} = \omega_{D} - B_{D}^\intercal W_{D-1}^{-1}\phi(B_{D}W_{D}\theta_{D})\quad (D:=\dmax(X)).
\end{equation*}

This equation is analogous to the vertex flow because there is only one term on the right-hand side.  Each coordinate $(d/dt)\theta_{D,F}$ will only have terms of the form $\theta_{D,H}$ where $F$ and $H$ are lower adjacent.

Let us first consider the case where $D=1$; namely, our simplicial complex is simply a graph, and we consider the edge flow on that graph. Let us write the vertex weights as $\tau_v$ and the edge weights as $\gamma_e$.

 Recall that $(B_1)_{ve} = \delta_{v,t(e)} - \delta_{v,s(e)}$, and thus we have
\begin{align*}
  -(B_1^\intercal &W_0^{-1} \phi(B_1W_1\theta_1))_e
  	= -\sum_{v\in X_0} (B_1^\intercal)_{ev} \tau_v^{-1} \phi\left(\sum_{e'\in X_1} (B_1)_{ve'} \gamma_{e'}\theta_{1,e'}\right)\\
	&= -\sum_{v\in X_0} (\delta_{v,t(e)}-\delta_{v,s(e)}) \tau_v^{-1} \phi\left(\sum_{e'\in X_1} (\delta_{v,t(e')}-\delta_{v,s(e')}) \gamma_{e'}\theta_{1,e'}\right)\\
	&=-\sum_{v\in X_0} (\delta_{v,t(e)}-\delta_{v,s(e)}) \tau_v^{-1}\phi\left(\sum_{e'\in t^{-1}(v)}\gamma_{e'}\theta_{1,e'} - \sum_{e'\in s^{-1}(v)}\gamma_{e'}\theta_{1,e'}\right)\\
	&= \frac{1}{\tau_{s(e)}}\phi\left(\sum_{e'\in t^{-1}(s(e))}\gamma_{e'}\theta_{1,e'} - \sum_{e'\in s^{-1}(s(e))}\gamma_{e'}\theta_{1,e'}\right)\\&\quad-\frac{1}{\tau_{t(e)}}\phi\left(\sum_{e'\in t^{-1}(t(e))}\gamma_{e'}\theta_{1,e'} - \sum_{e'\in s^{-1}(t(e))}\gamma_{e'}\theta_{1,e'}\right).
\end{align*}

This formula comes out pretty nicely (once the chalk has cleared, at least).  Basically as we see, the only terms that appear on the right-hand side of $(d/dt)\theta_{1,e}$ are those edges which share a vertex with $e$.  Moreover, we see that if the graph is regular (every vertex has the same degree $\deg$) then each sum inside a nonlinearly will have the same number of terms.  Note, however, except for the case where $\deg = 2$, the edge flow is absolutely not equivalent to a vertex flow on a graph.

\subsection{Triangulation of the torus}\label{sec:torus}

Let us derive all three equations on a simplicial complex representing a triangulation of the torus.  We consider the homogeneous equation $(\omega_d\equiv 0)$ for simplicity.

Choose $m,n\ge 2$.  We define the vertex set as $\{(i,j),0\le i\le m, 0\le j \le n\}$.  The edges of the complex are given by $\eh i j, \ev i j, \ed i j$ where 
\begin{align*}
  \eh i j = [(i,j),(i,j+1)],\mbox{ i.e. }\quad (i,j) &\xrightarrow{\eh i j} (i, (j+1)\mod n)\\
    \ev i j = [(i,j),(i+1,j)],\mbox{ i.e. }(i,j) &\xrightarrow{\ev i j} ((i+1) \mod m, j)\\
     \ed i j = [(i,j),(i+1,j+1)],\mbox{ i.e. } (i,j) &\xrightarrow{\ed i j} ((i + 1) \mod m, (j+1)\mod n)
\end{align*}
and the triangles of the complex are given by $\sil i j, \sir i j$, where
\begin{equation*}
  \sil i j = [(i,j), (i+1,j), (i+1,j+1)],\quad \sir i j =  [(i,j), (i,j+1), (i+1,j+1)].
\end{equation*}
The fact that we take mods in the coordinates is what makes this to be a torus.  (This is  a generalization of~\cite[Example 3, p. 17]{Munkres.book}, for example.)

\begin{figure}[th]
\begin{center}
\scalebox{1.25}{%
\begin{tikzpicture}[->,>=stealth',shorten >=1pt,auto,node distance=3cm,
  thick,main node/.style={circle,fill=yellow!30,draw,font=\sffamily\bfseries},scale=1]

  \draw [-,fill=red!10] (0,0) -- (0,-3) -- (3,-3);
  \draw [-,fill=red!10] (3,0) -- (3,-3) -- (6,-3);
  \draw [-,fill=red!10] (0,-3) -- (0,-6) -- (3,-6);
  \draw [-,fill=red!10] (3,-3) -- (3,-6) -- (6,-6);
  \draw [-,fill=green!10] (0,0) -- (3,0) -- (3,-3);
  \draw [-,fill=green!10] (3,0) -- (6,0) -- (6,-3);
  \draw [-,fill=green!10] (0,-3) -- (3,-3) -- (3,-6);
  \draw [-,fill=green!10] (3,-3) -- (6,-3) -- (6,-6);

  \node[main node][label=above:{$v_{i-1,j-1}$}] (00) {};
  \node[main node][label=left:{$v_{i,j-1}$}] (10) [below of=00] {};
  \node[main node][label=left:{$v_{i+1,j-1}$}] (20) [below of=10] {};
  \node[main node][label=above:{$v_{i-1,j}$}] (01) [right of=00] {};
  \node[main node][label=above right:{$v_{i,j}$}] (11) [below of=01] {};
  \node[main node][label=below:{$v_{i+1,j}$}] (21) [below of=11] {};
  \node[main node][label=above:{$v_{i-1,j+1}$}] (02) [right of=01] {};
  \node[main node][label=above right:{$v_{i,j+1}$}] (12) [below of=02] {};
  \node[main node][label=above right:{$v_{i+1,j+1}$}] (22) [below of=12] {};
  
  \node[scale=0.75,color=red] at (1,-2.25) {$\sil{i-1}{j-1}$};
  \node[scale=0.75,color=red] at (4,-2.25) {$\sil{i-1}{j}$};
  \node[scale=0.75,color=red] at (1,-5.25) {$\sil{i}{j-1}$};
  \node[scale=0.75,color=red] at (4,-5.25) {$\sil{i}{j}$};
  \node[scale=0.75,color=green!30!black] at (2.25,-0.75) {$\sir{i-1}{j-1}$};
  \node[scale=0.75,color=green!30!black] at (5.25,-0.75) {$\sir{i-1}{j}$};
  \node[scale=0.75,color=green!30!black] at (2.25,-3.75) {$\sir{i}{j-1}$};
  \node[scale=0.75,color=green!30!black] at (5.25,-3.75) {$\sir{i}{j}$};

  \path[every node/.style={font=\sffamily\small},color=blue!55!black]
   (00) edge node {$\eh {i-1}{j-1}$} (01)
	 (01) edge node {$\eh {i-1}{j}$} (02)
	 (10) edge node [below] {$\eh {i} {j-1}$} (11)
	 (11) edge node [below] {$\eh {i}{j}$} (12)
	 (20) edge node [below] {$\eh {i+1} {j-1}$} (21)
	 (21) edge node [below] {$\eh {i+1}{j}$} (22)
	 (00) edge node [rotate=-35]{$\ed {i-1}{j-1}$} (11)
	 (01) edge node [rotate=-35]{$\ed {i-1} {j}$} (12)
	 (10) edge node [rotate=-35]{$\ed {i}{j-1}$} (21)
	 (11) edge node [rotate=-35]{$\ed {i} {j}$} (22)
	 (00) edge node [left] {$\ev {i-1} {j-1}$} (10)
	 (01) edge node {$\ev {i-1} {j}$} (11)
	 (02) edge node {$\ev {i-1} {j+1}$} (12)
	 (10) edge node [left] {$\ev {i} {j-1}$} (20)
	 (11) edge node {$\ev {i} {j}$} (21)
	 (12) edge node {$\ev {i} {j+1}$} (22);
	 
	 \end{tikzpicture}
}

\end{center}
\caption{A piece of the simplicial complex triangulating a torus, with vertices, edges, and triangles labeled.  We use the convention that the edges are blue(ish), the ``left'' triangles red, and the ``right'' triangles green --- and the labels match this coloring scheme as well. Recall, as described in the text, that in the full grid (not pictured) the left/right and up/down edges are identified.  }
\label{fig:torus}
\end{figure}
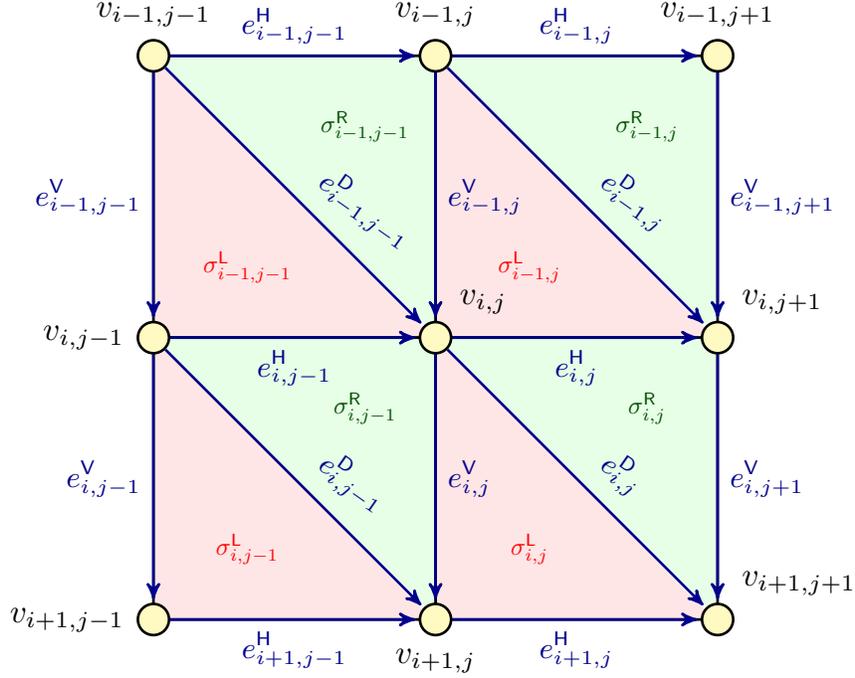

We  compute that 
\begin{align*}
 \partial_2 \sil i j & = \ed i j - \ev i j - \eh{i+1}j, &\partial_1 \eh i j &= v_{i,j+1} - v_{i,j}\\
 \partial_2 \sir i j & = \eh i j - \ev i {j+1} - \ed{i}j, &\partial_1 \ev i j &= v_{i+1,j} - v_{i,j}\\
 &&\partial_1 \ed i j &= v_{i+1,j+1} - v_{i,j}
\end{align*}
where we take modulos in the appropriate places.  The main computation here is to get the 0-, 1-, and 2-flows on this simplicial complex, using~\eqref{eq:nl}.  Expanding out this formula, we obtain
\begin{align}
  \frac{d}{dt}\theta_2 &= -B_2^\intercal \sin(B_2\theta_2),\label{eq:torus2}\\
  \frac{d}{dt}\theta_1 &= -B_2\sin(B_2^\intercal\theta_1) - B_1^\intercal \sin(B_1\theta_1),\label{eq:torus1}\\
  \frac{d}{dt}\theta_0 &= -B_1 \sin(B_1^\intercal \theta_0),\label{eq:torus0}
\end{align}

Then the vertex flow in coordinates is:
\begin{align*}
  \frac{d}{dt}(\theta_0)_{(i,j)} &=\sin(\theta_{0,i,j-1}-\theta_{0,i,j}) + \sin(\theta_{0,i,j+1}-\theta_{0,i,j}) + \sin(\theta_{0,i-1,j}-\theta_{0,i,j})+\\
	&\quad + \sin(\theta_{0,i+1,j}-\theta_{0,i,j}) + \sin(\theta_{0,i-1,j-1}-\theta_{0,i,j}) - \sin(\theta_{0,i+1,j+1}-\theta_{0,i,j})
\end{align*}
We can read this off of Figure~\ref{fig:torus} as follows:  if we look at the node $v_{ij}$, note that it has six neighbors: $v_{i-1,j}$, $v_{i,j-1}$, $v_{i+1,j}$, $v_{i+1,j}$, $v_{i-1,j-1}$, $v_{i+1,j+1}$, and each of these corresponds to a single term on the right-hand side of the vertex flow.

The edge flow in coordinates is:
\begin{align*}
  \frac{d}{dt}\theta_{\sv i j H}
  &= \sin(\theta_{\sv i{j-1}H} + \theta_{\sv{i-1}j V}+\theta_{\sv{i-1}{j-1}D} - \theta_{\sv i j H}- \theta_{\sv i j V}- \theta_{\sv i j D})\\
  &\quad + \sin(-\theta_{\sv i j H} - \theta_{\sv{i-1}{j+1} V}-\theta_{\sv{i-1}{j}D} + \theta_{\sv i{j+1}H}+\theta_{\sv i{j+1} V}+ \theta_{\sv i{j+1} D})\\
  &\quad + \sin(-\theta_{\sv i j H} - \theta_{\sv {i-1}j V} + \theta_{\sv {i-1}j D}) + \sin(-\theta_{\sv i j H}-\theta_{\sv i{j+1}V} + \theta_{\sv i j D}),\\
   \frac{d}{dt}\theta_{\sv i j V}
  &= \sin(-\tsv i j V -\tsv i j D -\tsv i j H + \tsv{i}{j-1}H + \tsv{i-1}{j-1}D+\tsv{i-1}jV) \\
  &\quad + \sin(-\tsv i j V - \tsv i{j-1}D-\tsv {i+1}{j-1}H + \tsv {i+1}j H + \tsv{i+1}jD+ \tsv{i+1}jV)\\
  &\quad + \sin(-\tsv i j V - \tsv {i+1}j H + \tsv i j D) + \sin(-\tsv i j V - \tsv i{j-1}H + \tsv i{j-1}D),\\
  \frac{d}{dt}\tsv i j D
  &= \sin(-\tsv i j D-\tsv i j V-\tsv i j H+\tsv i{j-1}H+\tsv{i-1}{j-1}D + \tsv{i-1}j V)\\
  &\quad + \sin(-\tsv i j D - \tsv i{j+1}V - \tsv{i+1}j H + \tsv{i+1}{j+1}H+ \tsv{i+1}{j+1}V+ \tsv{i+1}{j+1}D)\\
  &\quad + \sin(-\tsv i j D + \tsv i{j+1}V + \tsv i j H) + \sin(-\tsv i j D + \tsv i j V + \tsv{i+1}jH).  
\end{align*}

Let us consider, for example, the $\tsv i j D$ equation.  There are four terms on the right-hand side of this equation, two of degree six and two of degree three.  The first two terms correspond to the down Laplacian, and the latter two correspond to the up Laplacian.  

To understand the first two terms:  consider the edge $\ed i j$, and note that its boundary contains two vertices:  $v_{i,j}$ and $v_{i+1,j+1}$, and each of these vertices have six edges connected to them.  If we for example consider the edges going into or out of $v_{i,j}$, we see that each of these is represented once in the first term on the right-hand side of $(d/dt)\tsv i j D$.  Moreover, the signs can be read off as follows:  the three edges pointing out of the vertex $v_{i,j}$ have negative signs, and the three edges pointing into the vertex have positive signs.  (More generally, those edges with the same orientation with respect to the vertex as a particular edge will appear with a minus sign, and those with the opposite orientation will appear with a plus sign.)   

To understand the latter two terms, note that $\ed ij$ is in the boundary of two triangles:  $\sil ij$ and $\sir ij$.  If we consider $\sir ij$ for example, we see that its boundary contains three edges:  $\ed ij$, $\ev i{j+1}$, and $\eh ij$. Note that since the latter two edges go in the opposite direction as $\ed i j$, they appear with plus signs and only $\ed ij $ appears with a minus sign.

The triangle flow in coordinates is:
\begin{align*}
   \frac{d}{dt}\theta_{2,\sv i j L} &=  \sin(\theta_{2,\sv{i+1}j R}-\theta_{2,\sv i j L}) + \sin(\theta_{2,\sv i{j-1}R}-\theta_{2,\sv i j L}) + \sin(\theta_{2,\sv{i-1}jL}-\theta_{2,\sv i j L})\\
   \frac{d}{dt}\theta_{2,\sv i j R} &= \sin(\theta_{2,\sv{i}j L}-\theta_{2,\sv i j R}) + \sin(\theta_{2,\sv {i-1}{j}L}-\theta_{2,\sv i j R}) + \sin(\theta_{2,\sv{i}{j+1}L}-\theta_{2,\sv i j R})\\
\end{align*}

We note here that the $2$-flow looks like a vertex flow, but this is because each edge in this complex is on the boundary of exactly two triangles --- so there actually is a graph out there for which the vertex flow is equivalent to the triangle flow on this complex.  (This will not be true in general, of course.)  Note that it has three terms, because for any triangle, there are exactly three triangles with which it shares an edge.  For example, for the triangle $\sir ij$, we see that it shares an edges with triangles above, to the right, and to the lower left (all of which are ``left'' triangles), giving $\sil{i-1}j$, $\sil{i}{i+j}$, and $\sil i j$ --- and this corresponds to the three terms on the right-hand side of the equation in an obvious manner.

Now let us consider the fixed points of these equations.  
The vertex flow is just the classical Kuramoto system on the vertices of the induced graph, which is a nice regular graph of degree 6.  Similarly, the triangle flow is also a classical Kuramoto system on a graph of degree $2$.  The new system here is the edge flow.  First note that since $\dim H_1(X) = 2$, the set of homological solutions will be $2$-dimensional.  As such, there is a two-dimensional linear subspace of solutions containing  the origin.  We give an example of such a solution in Figure~\ref{fig:torus-hom} below (note that it is quantitatively complicated, even though it has a simple qualitative description).

\begin{figure}[th]
\begin{center}
\includegraphics[width=0.9\textwidth]{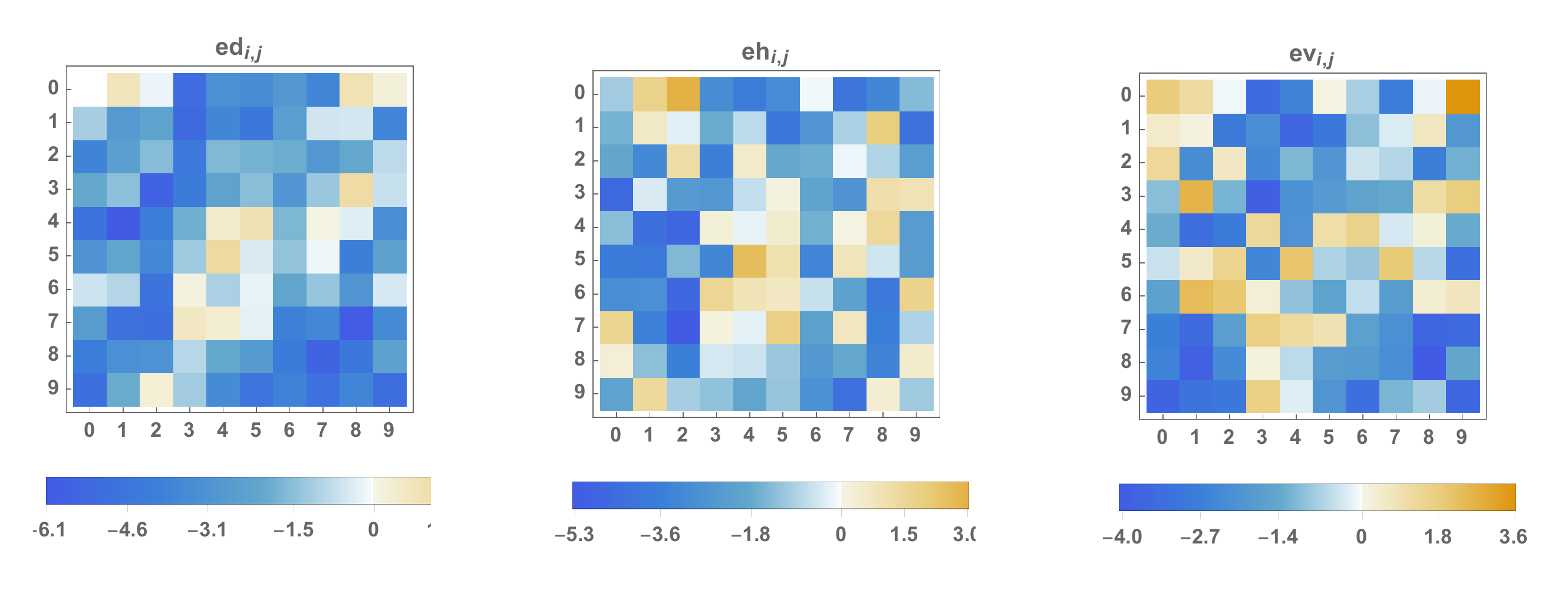}
\end{center}
\caption{Homological solution of the edge flow for the torus triangulation with $m=n=10$ and with the choice $\theta_{0,0,D} = 0, \theta_{0,1,D} = 1$ (recall that there are two degrees of freedom that need to be set).}
\label{fig:torus-hom}
\end{figure}

We can again repeat the construction in Section~\ref{sec:twistlike} as follows:  we find a  solution to the simultaneous system
\begin{equation}\label{eq:B1B2}
  B_1 x_1 = (\alpha-2\pi, \alpha,\alpha,\dots, \alpha),\quad B_2^\intercal x_1 = {\bf 0},
\end{equation}
where the first component in the vector above is the $v_{0,0}$ component (recall that $B_1x_1\in C_0(X)$). We find numerically that choosing $\alpha = 2\pi/(mn)$ gives a solution for a wide variety of $m,n$, and we present such an example numerically in Figure~\ref{fig:torus-hom}.  By Corollary~\ref{cor:constant} as long as $mn>4$ we obtain a stable fixed point.  Of course, just as there is a two-dimensional space of homological solutions, the solution to this family of equations also has two degrees of freedom.  By symmetry, there is nothing special about the $(0,0)$ component above, and so, for example, in~\eqref{eq:B1B2} we could have chosen any other component to be the one equal to $2\pi-\alpha$ and obtained a different twist-like solution.

\section{Conclusions and outlook}\label{sec:outtro}

In this paper we have presented a model of synchronization/consensus that is defined on simplicial complexes and has several nice properties.  In particular, we have shown that this is the gradient flow of a certain energy functional on a simplicial complex, which allows us to establish stability of certain steady states even when the linearization happens to be degenerate.

While we have established some basic results about stability, it is clear that these systems can support a rich collection of steady states.  For example, the type of constructions used to generate nonhomological solutions in Sections~\ref{sec:twistlike} and~\ref{sec:torus} can be generalized to a wide variety of examples.  One concrete question, for example:  for the vertex flow, all that is needed for twist solutions to exist is that the graph weights be circulant.  Is there a simple characterization describing which simplicial complexes will support twist-like solutions?  More generally, it would be nice to have a better understanding of exactly how symmetries in the underlying simplicial complex drives symmetries in the steady states.

We also point out that we have established multistability (the coexistence of multiple steady states) in several examples above.  Although it is beyond the scope of the current work, it would be interesting to understand how often a random initial condition decays to a homological solution in those cases where nonhomological solutions are attracting. 

Another question of interest would be to consider the role of different choices for $\phi(\cdot)$ and how this might impact the dynamics. For example, a common choice of $\phi(\cdot) = \sin(\cdot)$ gives rise to generalizations of the Kuramoto model, and specific properties of the function $\sin(\cdot)$ were important above.  Examples might include if $\phi$ were unbounded but nonlinear (esp. invertible), or if the cardinality of the preimage of $\phi(\cdot)$ was not essentially constant --- in these cases the existence and stability of the steady states would have to be attacked in a different manner.

It is worth saying a few words about what can be done differently with this model as well.  As mentioned in the introduction, the model we study here has  already been presented in the innovative paper~\cite{millan2020explosive}, but in that paper the authors considered a different family of questions than those considered here.  Whereas in this work we seek to understand the stability of certain stationary solutions, and the possibility of multistability for these systems, in~\cite{millan2020explosive} the authors considered the problem of synchronization (and in particular, explosive synchronization) for this model when the forcing terms are drawn from a distribution and the strength of the simplicial coupling is varied.  In particular, in~\cite{millan2020explosive} the authors note and exploit the structure of the Hodge decomposition of the simplicial Laplacian to show that the effective equations for the order parameter decouple nicely into ``up'' and ``down'' terms.  It is plain from these results that the present model is likely to exhibit all of the complexity that has been thoroughly studied in network-level consensus models and as such the present model is certainly worthy of extensive further study.

Finally, we note that in~\eqref{eq:nl}, the flows at each dimension were independent --- as such, it is possible to set initial data so that the solutions at each level are not connected.  It would be interesting to consider modifications of this system that would require the solutions at various dimensions to be related in some manner, so that we would have a flow on the entire simplex, instead of a family of flows, one for each dimension.

%

\end{document}